\definecolor{citations}{rgb}{0,0.75,0}
\crefname{conjecture}{Conjecture}{Conjectures}
\newtheorem{theorem}{Theorem}[section]
\newtheorem{proposition}[theorem]{Proposition}
\newtheorem{lemma}[theorem]{Lemma}
\theoremstyle{definition}
\newtheorem{definition}[theorem]{Definition}
\newtheorem{example}[theorem]{Example}
\newcommand{\rk}{\mathrm{rk}}
\newcommand{\TT}{\mathsf{T}}
\newcommand{\CC}{\mathsf{C}}
\newcommand{\PT}{\mathrm{PT}}
\newcommand{\Orn}{\mathrm{Orn}}
\newcommand{\OO}{\mathcal{O}}
\newcommand{\Pop}{\mathsf{Pop}} 
\newcommand{\Orb}{\mathrm{Orb}} 
\newcommand{\height}{\mathrm{height}} 
\newcommand{\depth}{\mathrm{depth}} 
\newcommand{\MM}{\mathcal{M}}
\newcommand{\de}{\delta}
\newcommand{\ch}{\mathrm{ch}}
\newcommand{\rt}{\mathfrak{r}} 
\newcommand{\WT}{\textsc{wt}}
\newcommand{\dfn}[1]{\textcolor{red}{\emph{#1}}}
\begin{document}

\title[]{The Pop-Stack Operator on Ornamentation Lattices}
\subjclass[2010]{}

\author[]{Khalid Ajran}
\address[]{Department of Mathematics, Massachusetts Institute of Technology, Cambridge, MA 02138, USA}
\email{kajran@mit.edu}

\author[]{Colin Defant}
\address[]{Department of Mathematics, Harvard University, Cambridge, MA 02138, USA}
\email{colindefant@gmail.com}

\maketitle

\begin{abstract}
Each rooted plane tree $\mathsf{T}$ has an associated \emph{ornamentation lattice} $\mathcal{O}(\mathsf{T})$. The ornamentation lattice of an $n$-element chain is the $n$-th Tamari lattice. We study the pop-stack operator $\mathsf{Pop}\colon\mathcal{O}(\mathsf{T})\to\mathcal{O}(\mathsf{T})$, which sends each element $\delta$ to the meet of the elements covered by or equal to $\delta$. We compute the maximum size of a forward orbit of $\Pop$ on $\mathcal{O}(\mathsf{T})$, generalizing a result of Defant for Tamari lattices. We also characterize the image of $\Pop$ on $\mathcal{O}(\mathsf{T})$, generalizing a result of Hong for Tamari lattices. For each integer $k\geq 0$, we provide necessary conditions for an element of $\mathcal{O}(\mathsf{T})$ to be in the image of $\Pop^k$. This allows us to completely characterize the image of $\Pop^k$ on a Tamari lattice.   
\end{abstract} 

\section{Introduction}\label{sec:intro}
\subsection{Ornamentation Lattices} 
Laplante-Anfossi \cite{Laplante} recently introduced remarkable polytopes called \emph{operahedra}. These polytopes are special examples of \emph{graph associahedra} \cite{CarrDevadoss,Postnikov} and \emph{poset associahedra} \cite{Galashin}, and they generalize both the associahedra and permutohedra. Laplante-Anfossi also considered certain posets obtained by orienting the $1$-skeletons of operahedra, and he asked if these posets are lattices. Defant and Sack \cite{DefantSack} answered Laplante-Anfossi's question in the affirmative, naming his posets \emph{operahedron lattices}. A key idea that Defant and Sack employed was to embed an operahedron lattice into the product of an interval in the weak order on the symmetric group and another lattice. This other lattice, which is the primary focus of this article, is a new structure called an \emph{ornamentation lattice}. 

Let $\PT_n$ denote the set of rooted plane trees with $n$ nodes. Let $\TT\in\PT_n$. We view $\TT$ both as a graph and as a poset in which every non-root element is covered by exactly $1$ element (so the root is the unique maximal element). We denote the partial order on $\TT$ by $\leq_\TT$. For $v\in\TT$, we write 
\[\Delta_{\TT}(v)=\{v'\in\TT: v'\leq_\TT v\}\quad\text{and}\quad\nabla_\TT(v)=\{v'\in\TT:v\leq_\TT v'\}.\] An \dfn{ornament} of $\TT$ is a set of nodes that induces a connected subgraph of $\TT$. Let $\Orn(\TT)$ denote the set of ornaments of $\TT$. We will often view an ornament as a rooted plane tree. Two sets are said to be \dfn{nested} if one is a subset of the other. An \dfn{ornamentation} of $\TT$ is a function $\delta\colon\TT\to\Orn(\TT)$ such that 
\begin{itemize}
\item for every $v\in\TT$, the unique maximal element of $\delta(v)$ is $v$; 
\item for all $v,v'\in\TT$, the ornaments $\delta(v)$ and $\delta(v')$ are either nested or disjoint. 
\end{itemize}
Let $\OO(\TT)$ denote the set of ornamentations of $\TT$. We view $\OO(\TT)$ as a poset, where the partial order $\leq$ is defined so that $\delta\leq\delta'$ if and only if $\delta(v)\subseteq\delta'(v)$ for all $v\in\TT$. Then $\OO(\TT)$ is a lattice whose meet operation $\wedge$ is such that \[(\delta\wedge\delta')(v)=\delta(v)\cap\delta'(v)\] for all $v\in\TT$. The minimum element $\delta_{\min}$ and maximum element $\delta_{\max}$ of $\OO(\TT)$ satisfy \[\delta_{\min}(v)=\{v\}\quad\text{and}\quad\delta_{\max}(v)=\Delta_\TT(v)\] for all $v\in\TT$. See \cref{fig:big_lattice}. If $\de\in\OO(\TT)$ and $v\in\TT$, then we call $\de(v)$ an ornament of $\de$. 

Ornamentation lattices naturally generalize Tamari lattices. Indeed, the $n$-th Tamari lattice is isomorphic to the ornamentation lattice of an $n$-element chain. Tamari lattices have numerous interesting properties; we will generalize some of these properties to the much broader family of ornamentation lattices.  

\begin{figure}[ht]
\begin{center}{\includegraphics[height=13cm]{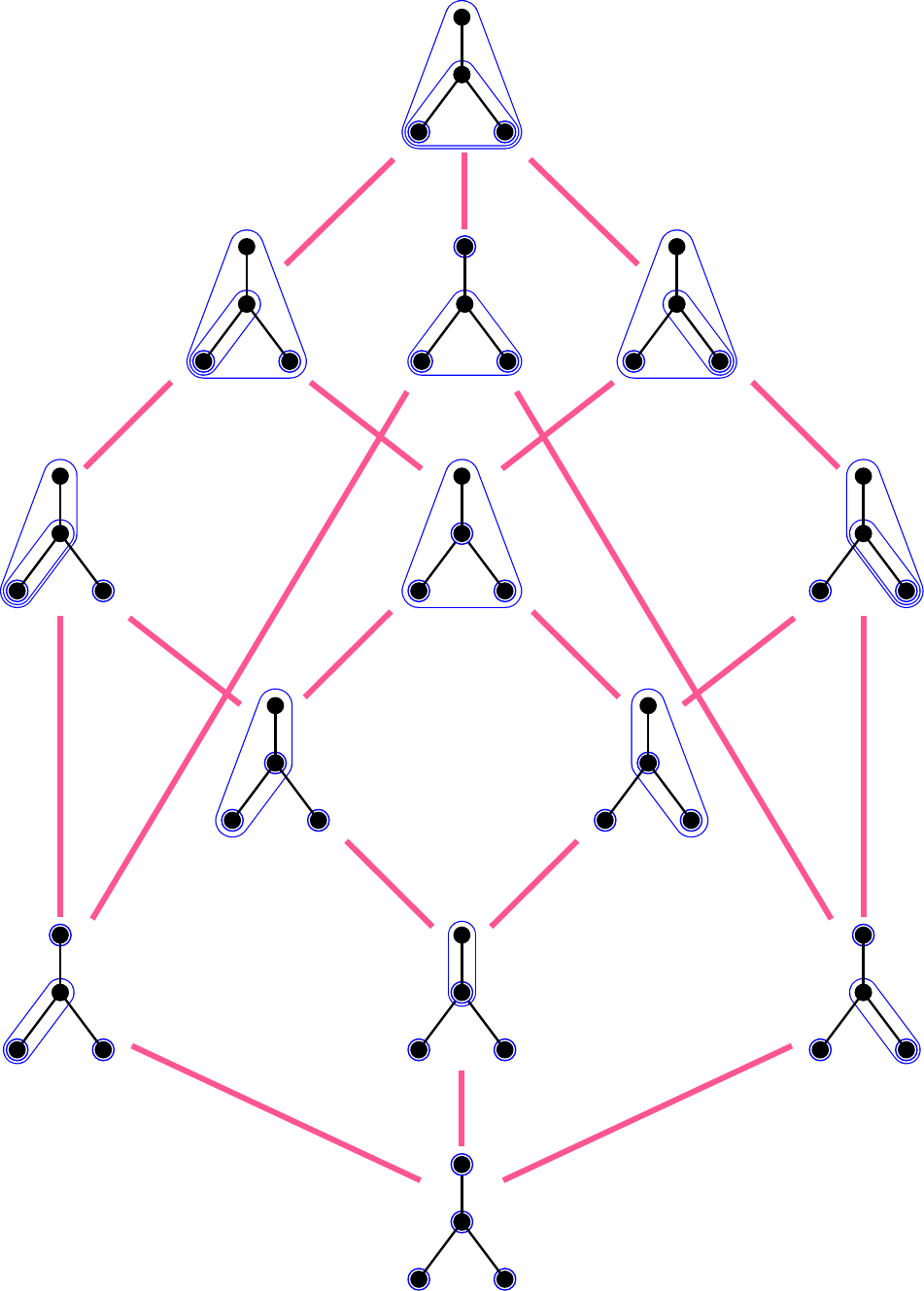}}
\end{center}
\caption{The ornamentation lattice of a rooted tree with $4$ nodes. 
} 
\label{fig:big_lattice}
\end{figure} 

\subsection{Pop-Stack Operators}
The \dfn{pop-stack operator} of a finite lattice $L$ is the map $\Pop\colon L\to L$ defined by 
\[\Pop(x)=\bigwedge(\{x\}\cup\{y\in L:y\lessdot x\}).\] Initial works on this operator focused on the case where $L$ is the weak order on the symmetric group \cite{Asinowski, Asinowski2, ClaessonPop, ClaessonPantone, Lichev, PudwellSmith, Ungar}, where it can be interpreted as a deterministic sorting procedure that makes use of a data structure called a \emph{pop-stack}. Several recent articles have studied pop-stack operators on other interesting lattices, especially from a dynamical point of view \cite{BarnardPop, ChoiSun, DefantCoxeterPop, DefantMeeting, Semidistrim, Hong}. For example, Defant \cite{DefantMeeting} and Hong \cite{Hong} found that the pop-stack operator on a Tamari lattice has particularly well behaved dynamics. Choi and Sun \cite{ChoiSun} and Barnard, Defant, and Hanson \cite{BarnardPop} later generalized these results to the much broader class of \emph{Cambrian lattices}. In this article, we generalize the results about Tamari lattices in a different direction by considering pop-stack operators of ornamentation lattices.  

Given a finite lattice $L$ and an element $x\in L$, we let $\Orb_{\Pop}(x)=\{x,\Pop(x),\Pop^2(x),\ldots\}$ denote the \dfn{forward orbit} of $x$ under $\Pop$. If $t$ is a sufficiently large integer, then $\Pop^t(x)$ is the minimum element $\hat{0}$ of $L$. We are interested in $|\Orb_{\Pop}(x)|$. When considering the pop-stack operator, one of the most well studied quantities associated to $L$ is 
\[\max_{x\in L}|\Orb_{\Pop}(x)|\] (see, e.g., \cite{BarnardPop, DefantCoxeterPop, DefantMeeting, Ungar}). For example, Barnard, Defant, and Hanson found that if $L$ is a Cambrian lattice of a finite Coxeter group $W$, then $\max_{x\in L}|\Orb_{\Pop}(x)|$ is equal to the Coxeter number of $W$. In particular, if $L$ is the $n$-th Tamari lattice (which is a Cambrian lattice of the symmetric group $S_n$), then this number is $n$; this special case was proved earlier in \cite{DefantMeeting}. 

Another important aspect of the pop-stack operator is its image, which has been studied for a variety of lattices in \cite{BarnardPop, ChoiSun, Semidistrim, Hong, Sapounakis}. Defant and Williams found numerous different interpretations of the size of the image of $\Pop$ when $L$ is a semidistributive lattice (or, more generally, a \emph{semidistrim} lattice). Notably, when $L$ is semidistributive, the image of $\Pop$ is in bijection with the facets of a simplicial complex called the \emph{canonical join complex} of $L$ (see \cite{Barnard,BarnardPop}). Hong \cite{Hong} characterized and enumerated $\Pop$ images on Tamari lattices. Choi and Sun \cite{ChoiSun} subsequently obtained analogous results for a variety of other lattices. Barnard, Defant, and Hanson then unified several of these results by providing a simple Coxeter-theoretic description of the image of $\Pop$ on an arbitrary Cambrian lattice of a finite Coxeter group. We aim to generalize Hong's results concerning Tamari lattices in a different direction by studying $\Pop$ images on ornamentation lattices. 

\subsection{Main Results} 

Let $\TT\in\PT_n$. Unless otherwise stated, we will write $\Pop$ for the pop-stack operator on the ornamentation lattice $\OO(\TT)$. Define the \dfn{depth} of a node $v\in\TT$ to be the quantity $\depth_\TT(v)=|\nabla_\TT(v)|-1$. Let $\MM_\TT$ denote the set of maximal chains in $\TT$. 

\begin{theorem}\label{thm:forward_orbit}
Let $\TT\in\PT_n$, and let $\rt$ be the root of $\TT$. Then 
\[\max_{\delta\in\OO(\TT)}|\Orb_\Pop(\delta)|=\max_{C\in\MM_{\TT}}\min_{v\in C\setminus\{{\rt}\}}(|\Delta_\TT(v)|+2\,\depth_\TT(v)-1).\]
\end{theorem}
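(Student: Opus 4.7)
My proof strategy has three parts: a preliminary analysis of covers, a lower bound construction, and an upper bound argument.

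\textbf{Preliminary step.} I would first pin down an explicit combinatorial description of the cover relations in $\OO(\TT)$, and hence of $\Pop$. The natural guess is that a cover $\eta\lessdot\delta$ arises by choosing a node $v_0$ and a leaf $\ell$ of the ornament $\delta(v_0)$, then simultaneously removing $\ell$ from every nested ornament $\delta(w)\subseteq\delta(v_0)$ that contains $\ell$. Covers with $\delta(v_0)$ outermost are indexed by such $(v_0,\ell)$ pairs that cannot be decomposed into two strictly smaller removals. This yields an explicit formula $\Pop(\delta)(v)=\delta(v)\setminus R(v,\delta)$, where $R(v,\delta)$ is a set of ``simultaneously removable'' elements. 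For each $v\in\TT$ and each $\delta$, define $\tau(v,\delta)$ to be the smallest $t\geq 0$ with $\Pop^t(\delta)(v)=\{v\}$; then $|\Orb_\Pop(\delta)|=1+\max_{v\in\TT}\tau(v,\delta)$.

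\textbf{Lower bound.} Let $C=(\rt=u_0,u_1,\ldots,u_k)$ be a maximal chain achieving the outer maximum, and let $v^*=u_{j^*}\in C\setminus\{\rt\}$ realize the inner minimum; write $f(v)=|\Delta_\TT(v)|+2\,\depth_\TT(v)-1$. I would construct an explicit ornamentation $\delta^*$ tailored to $C$ and $v^*$. A first candidate sets $\delta^*(u_i)=\Delta_\TT(u_i)$ for $0\leq i<j^*$, $\delta^*(u_i)$ equal to a specific sub-ornament along $C$ for $i\geq j^*$, and $\delta^*(w)=\{w\}$ otherwise; small examples suggest the construction interpolates between $\delta_{\max}$ and a purely ``chain-shaped'' ornament depending on whether $|\Delta_\TT(v^*)|$ or $2\,\depth_\TT(v^*)$ dominates in $f(v^*)$. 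By iterating $\Pop$ and tracking which leaves of $\delta^*(\rt)$ become removable at each step, I would argue that $\delta^*(\rt)$ shrinks through a prescribed sequence of intermediate states, requiring exactly $f(v^*)-1$ iterations to collapse to $\{\rt\}$; this gives $|\Orb_\Pop(\delta^*)|\geq f(v^*)$.

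\textbf{Upper bound.} Given any $\delta\in\OO(\TT)$ and any $v$ with $\tau(v,\delta)=T$, the goal is to produce a maximal chain $C$ of $\TT$ through $v$ (extended arbitrarily to a leaf) so that $f(u)\geq T+1$ for every $u\in C\setminus\{\rt\}$. The key tool is a propagation argument: if $\Pop^{t-1}(\delta)(v)\neq\{v\}$, then either $\Delta_\TT(v)$ is large enough to supply that many steps of shrinking (contributing $|\Delta_\TT(v)|$), or a chain of ancestors of $v$ still carry nontrivial nested ornaments that must be cleared from the inside out (contributing $2\,\depth_\TT(v)$). I would formalize this via induction on $t$: if the ornament at $v$ has not yet collapsed after $t-1$ iterations, then tracing the removal process backward produces an ancestor-chain and a descendant-chain, concatenating to a maximal chain $C$ along which every $f$-value is at least $t+1$. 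Taking $t=|\Orb_\Pop(\delta)|-1$ finishes the proof.

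\textbf{Main obstacle.} The hardest step is justifying the factor of $2$ on $\depth_\TT(v)$ in the upper bound. Intuitively, each element's removal from an outer ornament requires both an ``own'' shrinking step and a ``setup'' step during which the nested ornaments beneath it are cleared, so the cost of clearing an element doubles at each level of the nesting hierarchy. Turning this into a rigorous amortized bound — one that matches $f$ exactly and simultaneously extracts a witnessing chain — will be the crux. The lower bound construction may also be delicate: the small examples one can compute show that neither $\delta_{\max}$ nor the simplest chain ornament uniformly realizes the maximum, so the construction must adapt to the chain $C$ and to which term of $f(v^*)$ dominates.
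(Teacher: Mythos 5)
Your overall architecture (explicit lower-bound construction plus an upper bound obtained by tracking how $\Pop$ shrinks ornaments) matches the paper's, and the reformulation $|\Orb_\Pop(\de)|=1+\max_{v}\tau(v,\de)$ is correct. But each of your three steps has a genuine gap. First, the cover description is wrong: a cover $\eta\lessdot\de$ in $\OO(\TT)$ changes \emph{exactly one} ornament, and what gets deleted is an entire maximal subornament $\de(u)$ hanging at a node $u$ wrapped by $v$ that is $\leq_\TT$-minimal among the wrapped nodes (\cref{lem:popmain}, \cref{rem:ornamentpartition}) --- not a single leaf removed simultaneously from all nested ornaments containing it. For instance, on the chain $v>u>w$ with $\de(v)=\{v,u,w\}$ and $\de(u)=\{u,w\}$, your simultaneous-removal map sends $\de$ to the ornamentation with $\de'(v)=\{v,u\}$, $\de'(u)=\{u\}$, which is \emph{not} covered by $\de$ (the ornamentation that only shrinks $\de(u)$ sits strictly between them); the actual cover at $v$ deletes both $u$ and $w$ from $\de(v)$ at once. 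Since $\Pop$ is the meet of the covers, getting this wrong propagates into both bounds.

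Second, the lower bound construction is not actually supplied: you acknowledge that neither $\de_{\max}$ nor a chain-shaped ornament works and that the construction "must adapt," but the adaptation is the whole content of this half of the proof. The paper's construction hinges on the quantity $f_C(\rt)=\min_{u\in C\setminus\{\rt\}}(|\Delta_\TT(u)|+2\depth_\TT(u))-|C|-1$ and on a careful linear ordering of the off-chain descendants (grouped into "boughs" and read from the bottom of the chain upward) so that the nested ornaments $\de^\dagger(v_i)=\{v_i,\dots,v_{|C^*|+k-1-i}\}$ are connected and each application of $\Pop$ strips exactly one node from each; your candidate with $\de^*(u_i)=\Delta_\TT(u_i)$ for small $i$ will typically lose several nodes per step. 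Third, the upper bound --- justifying the factor $2\depth_\TT(v)$ --- is exactly the crux you flag as unresolved. The paper resolves it by defining the $v$-rank of $u$ as a height in an augmented tree $\TT_v^*$ (append $f_C(v)$ nodes below each maximal chain $C$ through $v$) and proving by induction on $k$ that every $u\in\de(v)$ with $\de\in\Pop^k(\OO(\TT))$ has $v$-rank at least $k$ (\cref{lem:vrank}), using the fact that each section of each ornament shrinks by at least one per application (\cref{rem:monotonic}). Your "propagation argument" gestures at this but contains no mechanism that makes the ancestor contribution count exactly twice, so as written the proof is incomplete.
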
 

Let $\CC_n$ denote the chain with $n$ nodes. The ornamentation lattice $\OO(\CC_n)$ is the $n$-th Tamari lattice. In this special case, \cref{thm:forward_orbit} tells us that the maximum size of a forward orbit of $\Pop$ on the $n$-th Tamari lattice is $n$, which agrees with the aforementioned result from \cite{DefantMeeting}.

The next theorem characterizes the image of the pop-stack operator on an ornamentation lattice, thereby generalizing a result of Hong from \cite{Hong}. 

\begin{theorem}\label{thm:pop_image_intro} 
Let $\TT\in\PT_n$ and $\de\in\OO(\TT)$. For each $u\in\TT$, let $o_u$ be the minimal ornament of $\de$ that properly contains $\de(u)$; if no such ornament exists, let $o_u=\TT$. Then $\de\in\Pop(\OO(\TT))$ if and only if for every $u\in\TT$ and every child $u'$ of $u$ in $\de(u)$, we have $\Delta_{\de(u)}(u')\neq\Delta_{o_u}(u')$. 
\end{theorem}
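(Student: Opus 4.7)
The plan is to prove the theorem by introducing a canonical lift $\widehat{\de}\in\OO(\TT)$ of $\de$ and studying how it interacts with the cover structure of $\OO(\TT)$. Define $\widehat{\de}(u):=o_u\cap \Delta_\TT(u)$ for every $u\in\TT$. I would first verify that $\widehat{\de}$ is itself a valid ornamentation satisfying $\widehat{\de}\geq \de$: each $\widehat{\de}(u)$ is an intersection of two subtrees of $\TT$, hence itself a subtree with $u$ as its maximum, and the nesting/disjointness of $\widehat{\de}(v)$ and $\widehat{\de}(v')$ would reduce to the corresponding property for the $\de$-ornaments $o_v,o_{v'}$ via case analysis. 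The theorem then follows from two further steps: $\Pop(\widehat{\de})=\de$ when the condition holds, and no $\gamma\in\OO(\TT)$ satisfies $\Pop(\gamma)=\de$ when the condition fails.

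A careful description of covers in $\OO(\TT)$ is central to both steps. I expect each cover $\gamma'\lessdot \gamma$ to be obtained by selecting a node $v$ and removing from $\gamma(v)$ a \emph{pendant} maximal sub-ornament, meaning a sub-ornament $\gamma(z)$ such that $z\in\gamma(v)\setminus\{v\}$, the ornament $\gamma(z)$ is maximal among sub-ornaments of $\gamma(v)$, and $\gamma(z)=\Delta_{\gamma(v)}(z)$ (this last condition ensuring $\gamma(v)\setminus\gamma(z)$ remains connected). Consequently, $\Pop(\gamma)(v)$ equals $\gamma(v)$ with the union of all its pendant pieces removed.

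For the sufficiency direction, I would apply this formula to $\gamma=\widehat{\de}$: every element of $\widehat{\de}(u)\setminus\de(u)$ lies in $\Delta_{o_u}(u')\setminus \Delta_{\de(u)}(u')$ for some child $u'$ of $u$ in $\de(u)$, and these excess elements are exactly covered by the pendant pieces of $\widehat{\de}(u)$. Combined with the condition's guarantee that each such set is nonempty, this yields $\Pop(\widehat{\de})(u)=\de(u)$ for every $u$, so $\de\in \Pop(\OO(\TT))$.

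For the necessity direction, suppose the condition fails at some pair $(u,u')$. I will argue that for any $\gamma\in\OO(\TT)$ with $\gamma\geq\de$, some pendant sub-ornament of $\gamma(u)$ sitting inside $\Delta_\TT(u')$ has its root already in $\de(u)$, forcing $\Pop(\gamma)(u)\subsetneq \de(u)$ and precluding $\gamma$ from being a preimage of $\de$. Intuitively, the failure of the condition means there is no ``escape region'' inside $\Delta_\TT(u')$ where the pendant can hide, so it must protrude into $\de(u)$ itself. The main obstacle is making this last step rigorous---one needs to descend through a chain of maximal sub-ornaments of $\gamma(u)$ inside $\Delta_\TT(u')$ until reaching a pendant and show that any such pendant must intersect $\de(u)$---together with pinning down the cover structure in the first place, since the pendant's root $z$ need not equal $u'$ and can lie strictly below it.
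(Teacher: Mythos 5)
Your lift $\widehat{\de}(u)=o_u\cap\Delta_\TT(u)$ is exactly the ornamentation $\de^*$ used in the paper's proof, and your sufficiency argument (compute $\Pop(\widehat\de)$ via minimal reductions, which are precisely your ``pendant'' maximal subornaments, and use the hypothesis to guarantee that each $u'$-section of $\de(u)$ with $u'\in\ch_{\de(u)}(u)$ is shielded by a pendant strictly below it) is the paper's argument. Two small inaccuracies there: an element of $\widehat\de(u)\setminus\de(u)$ lies below some child $u'$ of $u$ that need \emph{not} belong to $\de(u)$ (e.g.\ when $\de(u)=\{u\}$), so your description of the excess should not restrict to children in $\de(u)$; and you still owe the (true but not entirely trivial) verification that $\widehat\de$ is an ornamentation, which requires comparing $o_v\cap\Delta_\TT(v)$ and $o_{v'}\cap\Delta_\TT(v')$ also when $o_v$ and $o_{v'}$ are distinct.

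The necessity direction contains a genuine gap: the local claim you propose is false. Take $\TT=\CC_4$ with nodes $v_1>_\TT v_2>_\TT v_3>_\TT v_4$, and let $\de(v_1)=\{v_1,v_2,v_3\}$, $\de(v_2)=\{v_2,v_3\}$, $\de(v_3)=\{v_3\}$, $\de(v_4)=\{v_4\}$. The condition fails at $(u,u')=(v_2,v_3)$ since $o_{v_2}=\de(v_1)$ and $\Delta_{\de(v_2)}(v_3)=\{v_3\}=\Delta_{\de(v_1)}(v_3)$. Now let $\gamma(v_1)=\TT$, $\gamma(v_2)=\{v_2,v_3,v_4\}$, $\gamma(v_3)=\{v_3\}$, $\gamma(v_4)=\{v_4\}$. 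Then $\gamma\geq\de$, the unique pendant of $\gamma(v_2)$ inside $\Delta_\TT(v_3)$ is $\{v_4\}$, whose root lies \emph{outside} $\de(v_2)$, and indeed $\Pop(\gamma)(v_2)=\{v_2,v_3\}=\de(v_2)$. So nothing goes wrong at the node $u=v_2$: the pendant \emph{can} ``hide'' in $\Delta_\TT(u')\setminus\Delta_{o_u}(u')$, i.e.\ outside $o_u$. The true obstruction is non-local: forcing the $u'$-section of $\gamma(u)$ to exceed $\Delta_{\de(u)}(u')=\Delta_{o_u}(u')$ makes $\gamma(u)$ protrude out of $o_u$ while still meeting it, so $\gamma(u)$ and $\Pop(\gamma)(v)=\de(v)=o_u$ (where $v$ is the node wrapping $u$) are neither nested nor disjoint. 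Ruling this out is exactly the content of the paper's \cref{lem:disjointornested}, which states that every ornament of $\gamma$ is nested with or disjoint from every ornament of $\Pop(\gamma)$; in the example above this is what kills $\gamma$ (one finds $\Pop(\gamma)(v_1)=\{v_1\}\neq\de(v_1)$). You need this cross-ornament compatibility lemma, or something equivalent, to complete the necessity direction; no argument confined to the single node $u$ can work.
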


In \cref{sec:semidistributivity}, we will prove that every ornamentation lattice $\OO(\TT)$ is in fact semdistributive. This means that $\OO(\TT)$ has a canonical join complex, the number of facets of which is $|\Pop(\OO(\TT))|$. This provides an alternative interpretation of \cref{thm:pop_image_intro}. See \cite[Corollary~9.10]{Semidistrim} for other interpretations of the size of the image of $\Pop$ on a semidistributive lattice. 

It is also natural to consider the image of $\Pop^k$ when $k\geq 2$. So far, this has not been explored in the literature, largely because it is often difficult to characterize this image. We will prove some necessary conditions for an ornamentation to be in the image of $\Pop^k$ (see \cref{lem:vrank,cla:popk_beads}). These conditions are, in general, not sufficient. However, we will obtain as a consequence a complete characterization of the image of $\Pop^k(\OO(\CC_n))$, where $\CC_n$ is an $n$-element chain poset (see \cref{thm:popk_tamari}). This will allow us to deduce the following explicit generating function for the sizes of the images of $\Pop^k$ on Tamari lattices.

\begin{theorem}\label{thm:gf}
For each nonnegative integer $k$, we have 
\[\sum_{n\geq 0}|\Pop^k(\OO(\CC_n))|x^n=\frac{1-x^{k+1}-\sqrt{(1-x^{k+1})^2-4x(1-x)(1-x^{k+1})}}{2x(1-x)}.\] 
\end{theorem}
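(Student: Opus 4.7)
My plan is to use the explicit characterization of $\Pop^k(\OO(\CC_n))$ given by \cref{thm:popk_tamari} to read off a recursive decomposition of such images, turn that decomposition into a quadratic functional equation for the generating function
\[F_k(x) := \sum_{n \geq 0} |\Pop^k(\OO(\CC_n))|\, x^n,\]
and then solve the quadratic explicitly.

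The first step is to algebraically identify the target. A short calculation shows that the closed-form expression in the statement satisfies
\[x(1-x)\, F_k(x)^2 = (1-x^{k+1})\bigl(F_k(x) - 1\bigr),\]
equivalently $(1 + x + \cdots + x^k)(F_k(x) - 1) = x\, F_k(x)^2$. Solving this quadratic in $F_k$ yields two branches; the branch with $F_k(0) = 1$ is exactly the stated formula (the other branch has a simple pole at $x = 0$). It therefore suffices to establish the functional equation.

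To this end, I would mimic the classical root decomposition of Tamari elements. Given a nonempty $\delta \in \OO(\CC_n)$, let $\delta(n) = [\ell, n]$ be its top ornament; restricting $\delta$ to $\{1, \ldots, \ell - 1\}$ and to $\{\ell, \ldots, n - 1\}$ gives ornamentations of $\CC_{\ell-1}$ and $\CC_{n-\ell}$, respectively, and summing over $\ell$ recovers $C(x) = 1 + x\, C(x)^2$ in the unrestricted case. Using \cref{thm:popk_tamari}, I would verify that when $\delta \in \Pop^k(\OO(\CC_n))$, both restrictions lie in the $\Pop^k$-image of the corresponding smaller chain, and conversely that a pair of such smaller $\Pop^k$-images can be glued at the top ornament subject to a bounded-length constraint: the length of a certain maximal run of singleton ornaments adjacent to the splice can take any value in $\{0, 1, \ldots, k\}$. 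Summing the weights $1 + x + \cdots + x^k$ over this padding produces the extra denominator factor $\frac{1}{1+x+\cdots+x^k} = \frac{1-x}{1-x^{k+1}}$, yielding $F_k = 1 + \frac{x(1-x)}{1-x^{k+1}} F_k^2$ after simplification.

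The main obstacle is identifying the precise $\Pop^k$-constraint on the glued pair. I expect this to hinge on the \emph{beads} structure of \cref{cla:popk_beads}, which in the chain setting should bound by $k$ the length of a maximal run of certain distinguished nodes near the splice. Verifying that this bound is both necessary (from the conditions in \cref{lem:vrank,cla:popk_beads}) and sufficient (using the full characterization of \cref{thm:popk_tamari}) for a decomposed pair to reassemble into a $\Pop^k$-image is what converts the naive Catalan recursion into the $k$-dependent functional equation. Once the equation is established, the stated closed-form for $F_k(x)$ is immediate from the quadratic solution with $F_k(0)=1$.
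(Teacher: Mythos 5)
Your overall route is the same as the paper's: both arguments rest entirely on the characterization in \cref{thm:popk_tamari}, decompose an ornamentation in $\Pop^k(\OO(\CC_n))$ according to its top ornament, and solve the resulting quadratic. Your algebraic reduction of the closed form to the functional equation $x(1-x)F_k(x)^2=(1-x^{k+1})\bigl(F_k(x)-1\bigr)$, together with the choice of the branch with $F_k(0)=1$, is correct and is exactly what the paper's recurrence produces.

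The soft spot is precisely the step you flag as the main obstacle, and the mechanism you sketch for it would not work as stated. Writing $A^{(k)}_n=|\Pop^k(\OO(\CC_n))|$ and $\de(v_1)=\{v_1,\dots,v_a\}$ with $a>1$, condition \textit{(ii)} of \cref{thm:popk_tamari} forces $a\le n-k$ and forces $\de(v_a),\dots,\de(v_{a+k-1})$ to be singletons; the configuration nested inside $\de(v_1)$ is then in bijection with $\Pop^k(\OO(\CC_{a+k-2}))$ only after one appends those $k$ forced singleton positions to the window (the nested non-singleton ornaments need $k$ ornaments below them, some of which lie outside the nested region), so the two restrictions are \emph{not} $\Pop^k$-images of chains of the naive sizes $\ell-1$ and $n-\ell$. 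Moreover, summing over a padding of length $0,\dots,k$ would multiply the generating function by $1+x+\cdots+x^k$, not divide by it, so the factor $\tfrac{1-x}{1-x^{k+1}}$ cannot arise that way. What the decomposition actually yields is the restricted convolution
\[A^{(k)}_n=\sum_{i=k}^{n-1}A^{(k)}_iA^{(k)}_{n-1-i}\quad(n\ge k+2),\qquad A^{(k)}_n=1\quad(n\le k+1),\]
where the restriction $i\ge k$ (the exclusion of small left factors, coming from the requirement of at least $k$ ornaments below a non-singleton top ornament) is what deforms the Catalan equation. Since $A^{(k)}_i=1$ for $i<k$, the excluded terms sum to $\sum_{i=0}^{k-1}A^{(k)}_{n-1-i}$, and translating the recurrence with its boundary values into generating functions gives
\[F_k(x)=\frac{1-x^{k+1}}{1-x}+x\left(F_k(x)-\frac{1-x^k}{1-x}\right)F_k(x),\]
which rearranges to your functional equation. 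So your target identity and endgame are right, but the ``padding produces a denominator'' heuristic should be replaced by this restricted convolution.
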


When $k=0$, the generating function in \cref{thm:gf} is that of the Catalan numbers, which we already knew since $\Pop^0(\OO(\CC_n))=\OO(\CC_n)$ is the $n$-th Tamari lattice. When $k=1$, the generating function is that of the Motzkin numbers; this recovers one of Hong's main results from \cite{Hong}. 

\subsection{Outline} 
\cref{sec:definitions} discusses basic properties of the pop-stack operator on an ornamentation lattice and introduces some notation and terminology. We prove \cref{thm:forward_orbit} in \cref{sec:maxorbit} after establishing a necessary condition for an ornamentation to be in the image of $\Pop^k$ (\cref{lem:vrank}). We prove \cref{thm:pop_image_intro} in \cref{sec:image}. In \cref{sec:image_iterate}, we prove \cref{cla:popk_beads}, which yields another constraint on the image of $\Pop^k$. We then characterize the image of $\Pop^k$ on the $n$-th Tamari lattice in \cref{thm:popk_tamari}, and we deduce \cref{thm:gf}. Finally, \cref{sec:semidistributivity} provides a short proof that ornamentation lattices are semidistributive. 

\section{Basics}\label{sec:definitions}

Let $\TT \in \PT_n$ be a rooted plane tree with root node $\mathfrak{r}$. All of our main results are trivial when $n=1$, so we will tacitly assume that $n\geq 2$. The \dfn{length} of a chain with $\ell$ elements is $\ell-1$. For $v \in \TT$, let $\height_\TT(v)$ and $\depth_\TT(v)$ denote the \dfn{height} and \dfn{depth} of $v$ in $\TT$. That is, $\height_\TT(v)$ is the maximum length of a chain in $\Delta_\TT(v)$, and $\depth_\TT(v)$ is the length of the chain $\nabla_\TT(v)$. A \dfn{descendant} of $v$ is an element of $\Delta_\TT(v)\setminus\{v\}$, and a \dfn{child} of $v$ is an element covered by $v$. Let $\ch_\TT(v)$ be the set of children of $v$. Let $\MM_\TT$ be the set of maximal chains of $\TT$. For $C \in \MM_\TT$ and $v\in C$, let $\height_C(v)$ denote the height of $v$ within $C$; if $v$ is not a leaf, then we let $\ch_C(v)$ be the unique child of $v$ in $C$. For a node $v$ with a descendant $u$, let $(u, v)$ denote the set of nodes (excluding $u$ and $v$) in the unique path from $u$ to $v$. We also write $[u, v)=(u,v)\cup\{u\}$, $(u, v]=(u,v)\cup\{v\}$, and $[u, v]=(u,v)\cup\{u,v\}$. 

For $C \in \MM_\TT$ and $v\in C$, let $b_C(v) = |\Delta_\TT(v)| - \height_C(v) - 1$ be the number of descendants of $v$ that do not lie in $C$. Define $f_C\colon C \rightarrow \mathbb{N}_0$ recursively as follows: 
\begin{equation}\label{eq:fC1} f_C(v) = \begin{cases} 
          0 & \text{if $v$ is a leaf;} \\
          
          f_C(\ch_C(v)) + 1 & \text{if }  f_C(\ch_C(v)) + 1 \leq b_C(\ch_C(v)); \\
          
          f_C(\ch_C(v)) & \text{otherwise.}
       \end{cases} 
    \end{equation}
In other words, if we walk up the chain $C$, then the value of $f_C$ starts at $0$, and each time we move to a parent node $v$, the value increases by $1$ so long as this does not cause $f_C(v)$ to exceed $b_C(\ch(v))$. 
Utilizing the identity $\depth(v) = |C| - \height_C(v) - 1$, one can show inductively that 
\begin{equation}\label{eq:fC2} f_C(v) = \begin{cases} 
          0 & \text{if $v$ is a leaf;} \\
          
          \min_{u \in  \Delta_C(v)\setminus\{v\}}(|\Delta_\TT(u)| + 2\depth_\TT(u)) - |C| - \depth_\TT(v) - 1& \text{otherwise.}
       \end{cases} 
    \end{equation}    

For each $v \in \TT$, define $\TT^*_v$ to be the tree obtained from $\TT$ by appending a chain with $f_C(v)$ nodes to the bottom of each chain $C\in\MM_\TT$ containing $v$. The following definition will give us a strong tool for understanding the maximum size of a forward orbit under $\Pop$.  

\begin{definition}\label{vrankdef}
Let $v\in\TT$. The \dfn{$v$-rank} of a node $u \in \TT$ is 
\[ \rk_v(u) = \begin{cases} 
          -1 & \text{if } u \not\in \Delta_\TT(v); \\

          \infty & \text{if } u = v; \\
          
          \height_{\TT^*_v}(u) & \text{otherwise.}
       \end{cases}
    \]
\end{definition}

\begin{figure}[ht]
\begin{center}{\includegraphics[height=10.598cm]{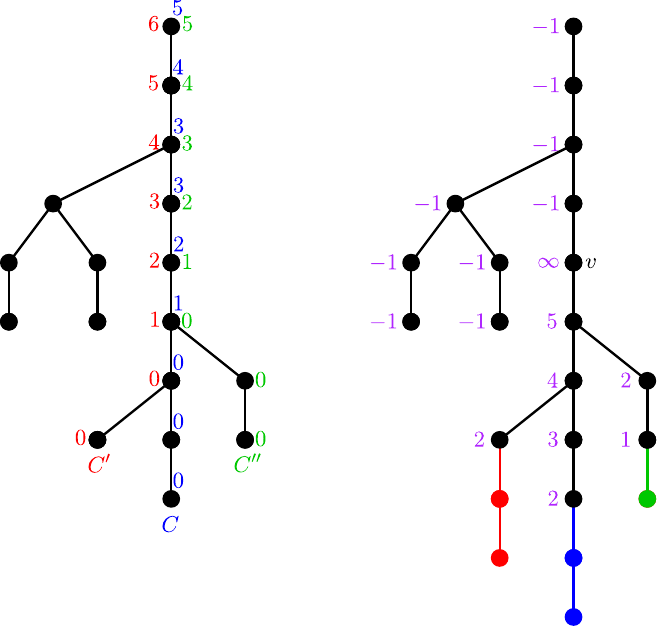}}
\end{center}
\caption{On the left is a plane tree $\TT$. Three of the maximal chains, named $C$, $C'$, and $C''$, are labeled under their leaves. For each $\widetilde{C}\in\{C,C',C''\}$, each node $u\in\widetilde C$ is labeled with the value of $f_{\widetilde{C}}(u)$. On the right is the tree $\TT_v^*$, where $v$ is as indicated. This tree is obtained by adding $f_{\widetilde{C}}(v)$ new nodes to the bottom of each chain $\widetilde{C}\in\{C,C',C''\}$. On the right, each node in $\TT$ is labeled with its $v$-rank. } 
\label{fig:fC}
\end{figure} 

See \cref{fig:fC} for an example of the above definitions. For each maximal chain $C\in\MM_\TT$, the values of $f_C$ are weakly increasing as we move up $C$. It follows that if $v$ is a child of $v'$, then $\TT_v^*$ is a subtree of $\TT_{v'}^*$. This implies that for every $v\in\TT$, the tree $\TT_v^*$ is a subtree of $\TT_{\rt}^*$. Moreover, the maximum of $\rk_{\rt}(u)$ over all non-root nodes $u$ is attained when $u$ is a child of $\rt$. This implies that 
\begin{equation}\label{eq:maximize_rank}
\max_{\substack{u,v\in\TT\\ u\neq v}}\rk_v(u)=\max_{u\in\ch_\TT(\rt)}\rk_{\rt}(u). 
\end{equation}

We next describe how ornamentations behave under $\Pop$. Consider an ornamentation $\de \in \OO(\TT)$. We say a node $v$ \dfn{wraps} a node $u$ in $\de$ if $\de(u)\subseteq\de(v)$ and there does not exist a node $w$ such that $\de(u)\subsetneq\de(w)\subsetneq \de(v)$. 
When $v$ wraps $u$ in $\delta$, we define the \dfn{reduction} of the ornament $\de(v)$ by the node $u$ to be the ornament $\de(v)\setminus\Delta_{\de(v)}(u)$, and we define $\de_u^v\colon\TT\to\Orn(\TT)$ by \[ \de_u^v(w) = \begin{cases} 
          \de(v)\setminus\Delta_{\de(v)}(u) & \text{if } w = v; \\
          
          \de(w) & \text{otherwise}.
       \end{cases}
    \] See \cref{fig:wrapping_reduction}. 

\begin{lemma}\label{lem:wrapping_reduction}
    Let $\de\in\OO(\TT)$, and suppose $v$ wraps $u$ in $\de$. The map $\de_u^v$ is an ornamentation. 
\end{lemma}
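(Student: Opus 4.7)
The plan is to verify directly that $\de_u^v$ satisfies the two defining axioms of an ornamentation. First observe that the wrapping hypothesis only makes sense when $\de(u)\subsetneq\de(v)$ (otherwise the reduction $\de(v)\setminus\Delta_{\de(v)}(u)$ is empty), which forces $u$ to be a proper descendant of $v$ in $\TT$. Since $\de_u^v$ agrees with $\de$ on every node other than $v$, the only things left to check are (i) $\de_u^v(v)$ is an ornament with unique maximal element $v$, and (ii) for every $w'\in\TT\setminus\{v\}$, the sets $\de_u^v(v)$ and $\de(w')$ are nested or disjoint.

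For (i), I would note that $\de(v)$ is a subtree of $\TT$ with $u$ a proper descendant of $v$, so removing the branch $\Delta_{\de(v)}(u)$ (the subtree of $\de(v)$ rooted at $u$) leaves a connected subgraph still containing $v$. Hence $\de_u^v(v)$ is an ornament, and $v$ remains its unique maximum since it was the unique maximum of $\de(v)$.

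For (ii), I would split on the possibilities for the relationship between $\de(v)$ and $\de(w')$ guaranteed by the nested-or-disjoint axiom for $\de$. If $\de(v)\subseteq\de(w')$ or $\de(v)\cap\de(w')=\emptyset$, the conclusion is immediate from $\de_u^v(v)\subseteq\de(v)$. The substantive case is $\de(w')\subsetneq\de(v)$, which I would further split by whether $w'\leq_\TT u$. If $w'\leq_\TT u$, then every node of $\de(w')$ lies in $\Delta_\TT(u)\cap\de(v)=\Delta_{\de(v)}(u)$, so $\de(w')$ is disjoint from $\de_u^v(v)$.

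The main obstacle is the remaining subcase $w'\not\leq_\TT u$ with $\de(w')\subsetneq\de(v)$; here I need $\de(w')\cap\Delta_{\de(v)}(u)=\emptyset$, and this is where the wrapping hypothesis on $v$ and $u$ is essential. Assuming a common element $x$, the nodes $u$ and $w'$ are both ancestors of $x$ in $\TT$ and hence comparable; since $w'\not\leq_\TT u$, this forces $u<_\TT w'$, and the connectedness of $\de(w')$ then forces $u\in\de(w')$. The nested-or-disjoint axiom applied to $\de(u)$ and $\de(w')$ rules out $\de(w')\subseteq\de(u)$ (which would yield $w'\leq_\TT u$), so $\de(u)\subseteq\de(w')\subsetneq\de(v)$. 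This chain strictly violates the wrapping condition unless $\de(w')=\de(u)$, but then $w'=u$, again contradicting $w'\not\leq_\TT u$. This closes the final case and completes the verification.
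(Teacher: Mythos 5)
Your proof is correct, and its decisive step---showing that any ornament straddling the removed branch $\Delta_{\de(v)}(u)$ must contain $u$ by connectedness and hence give $\de(u)\subsetneq\de(w')\subsetneq\de(v)$, contradicting the wrapping hypothesis---is exactly the argument the paper uses (there phrased as a single contradiction starting from a hypothetical offending node $x$, rather than as your explicit case split). You are slightly more thorough in that you also verify that $\de_u^v(v)$ remains connected with maximum $v$, a point the paper leaves implicit.
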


\begin{proof}
Suppose instead that $\delta_u^v$ is not an ornamentation. Then there must exist $x\in\TT$ such that $\delta_u^v(v)$ and $\delta_u^v(x)$ are not disjoint or nested. Since $\delta_u^v(v)\subseteq\delta(v)$ and $\delta_u^v(x)=\delta(x)$, it must be the case that $x\in\delta(v)$. Then $\delta(x)$ contains nodes in $\delta_u^v(v)$ and $\Delta_{\delta(v)}(u)$, so it must contain $u$. It follows that $\delta(u)\subseteq\delta(x)\subseteq\delta(v)$. But $x$ cannot equal $u$ or $v$, so this contradicts the assumption that $v$ wraps $u$ in $\delta$. 
\end{proof}

\begin{figure}[ht]
\begin{center}{\includegraphics[height=4.2cm]{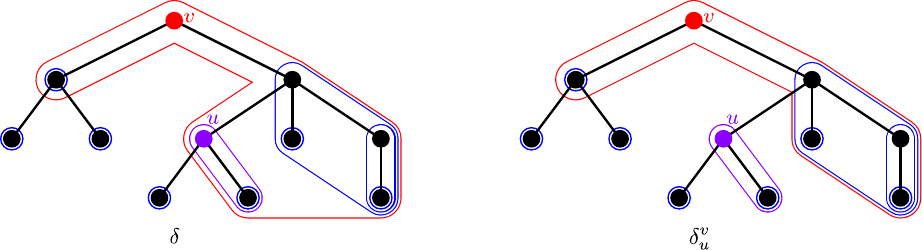}}
\end{center}
\caption{On the left is an ornamentation $\delta$. On the right is the ornamentation $\delta_u^v$ obtained by reducing the ornament hung at $v$ by the node $u$.} 
\label{fig:wrapping_reduction}
\end{figure}

Given an ornamentation $\delta$ and a node $v\in\TT$, define a \dfn{minimal reduction} of the ornament $\delta(v)$ to be a reduction of $\delta(v)$ that is not contained in any other reduction of $\delta(v)$. Intuitively, a (minimal) reduction of $\delta(v)$ is a (minimal) change we can make to an ornamentation without ruining its status as an ornamentation. Let $M_\delta(v)$ be the set of nodes $u\in\delta(v)$ such that $\delta(v)\setminus\Delta_{\delta(v)}(u)$ is a minimal reduction of $\de(v)$. 
We will see that each ornamentation covered by $\de$ in $\OO(\TT)$ differs from $\de$ only by replacing one ornament with a minimal reduction of that ornament. In the following lemma, we describe which reductions of $\de$ are minimal. 

\begin{lemma}\label{rem:ornamentpartition}
    Let $\de \in \OO(\TT)$ and $v \in \TT$, and let $u_1, u_2, \dots, u_k$ be the nodes wrapped by $v$ in $\de$. 
    \begin{enumerate}[(i)]
        \item We have $\de(v) = \{v\} \sqcup \de(u_1) \sqcup \de(u_2) \sqcup \dots \sqcup \de(u_k)$. 
        \item If $o$ is a reduction of $\de(v)$, then $o = \{v\} \cup \bigcup_{u\in U} \de(u)$ for some nonempty $U \subseteq \{u_1, u_2, \dots, u_k\}$.
        \item A reduction $o$ of $\de(v)$ is minimal if and only if $o = \de(v) \setminus \de(u_i)$ for some $i$.
    \end{enumerate}
\end{lemma}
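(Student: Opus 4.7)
The strategy is to leverage the laminar structure of the ornaments of $\de$ and to read off the reductions, from the partition in part (i), as set-theoretic operations on its blocks.

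For part (i), I would argue as follows. Each $\de(u_i)$ is, by the definition of wrapping, a maximal ornament of $\de$ strictly contained in $\de(v)$. For two such ornaments $\de(u_i)\ne\de(u_j)$, no strict nesting is possible (it would violate maximality of one of them), so the nested-or-disjoint axiom forces $\de(u_i)\cap\de(u_j)=\emptyset$. Since $u_i$ is the maximum of $\de(u_i)$ and $u_i\ne v$, we also have $v\notin\de(u_i)$. For the covering direction, any $w\in\de(v)\setminus\{v\}$ satisfies $\de(w)\subsetneq\de(v)$ (as $v=\max\de(v)$ and $w\ne v$), so $\de(w)$ may be enlarged to an ornament of $\de$ maximal among those strictly inside $\de(v)$; this enlargement is $\de(u_i)$ for some wrapped $u_i$, and hence $w\in\de(u_i)$.

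For parts (ii) and (iii), the key step is the identity
\begin{equation*}
\Delta_{\de(v)}(u_i)\;=\;\bigsqcup_{u_j\leq_\TT u_i}\de(u_j).
\end{equation*}
The inclusion $\supseteq$ is immediate: if $u_j\leq_\TT u_i$ and $x\in\de(u_j)$, then $x\leq_\TT u_j\leq_\TT u_i$ and $x\in\de(v)$, so $x\in\Delta_{\de(v)}(u_i)$. For $\subseteq$, intersect with the partition from (i); since $v\notin\Delta_{\de(v)}(u_i)$, the question reduces to whether each $\de(u_j)$ is contained in or disjoint from $\Delta_{\de(v)}(u_i)$. The one configuration to rule out is $u_i<_\TT u_j$ with $\de(u_j)\cap\Delta_{\de(v)}(u_i)\ne\emptyset$: taking $w$ in the intersection, connectedness of the ornament $\de(u_j)$ forces the $\TT$-path from $w$ up to $u_j$ to lie inside $\de(u_j)$, hence $u_i\in\de(u_j)$. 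But then $u_i\in\de(u_i)\cap\de(u_j)$, contradicting the disjointness from part (i).

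Part (ii) then falls out directly: a reduction by $u_i$ equals $\de(v)\setminus\Delta_{\de(v)}(u_i)=\{v\}\cup\bigcup_{u_j\not\leq_\TT u_i}\de(u_j)$, which is of the claimed form with $U=\{u_j:u_j\not\leq_\TT u_i\}$. For part (iii), a reduction is minimal (in the sense of the preceding definition) exactly when the removed chunk $\Delta_{\de(v)}(u_i)$ is inclusion-minimal among the $\Delta_{\de(v)}(u_j)$; by the identity, this happens precisely when $u_i$ is $\leq_\TT$-minimal among the wrapped nodes, in which case $\Delta_{\de(v)}(u_i)=\de(u_i)$ and the reduction equals $\de(v)\setminus\de(u_i)$. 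The one substantive technical step is the connectedness argument excluding the configuration $u_i<_\TT u_j$ in the key identity; everything else is bookkeeping with the partition from part (i).
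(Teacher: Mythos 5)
Your proof is correct and takes essentially the same route as the paper's: part (i) rests on the pairwise disjointness of the wrapped ornaments, and parts (ii)--(iii) rest on the identity $\Delta_{\de(v)}(u_i)=\bigsqcup_{u_j\leq_\TT u_i}\de(u_j)$, which the paper states in words ("the tree structure and the connectedness of each of the ornaments ensure that $\Delta_{\de(v)}(u_i)$ is the union of the ornament $\de(u_i)$ and any other ornaments hanging below it") and which the paper likewise exploits by selecting a $\leq_\TT$-minimal wrapped node. Your writeup is merely more explicit than the paper's terse argument; the ideas coincide.
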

We refer to the ornaments $\de(u_i)$ from \cref{rem:ornamentpartition} as \dfn{maximal subornaments} of $\de(v)$. 

\begin{proof} 
\textit{(i)} This simply asserts that each node in $\de(v)$ is either $v$ or a member of a maximal subornament of $v$. The maximal subornaments are disjoint because if they were nested, the inner ornament would not be wrapped by $v$.

\textit{(ii)} By definition of a reduction, we can write $o = \de(v)\setminus\Delta_{\de(v)}(u_i)$ for some $i \in [k]$. The tree structure and the connectedness of each of the ornaments ensure that $\Delta_{\de(v)}(u_i)$ is the union of the ornament $\de(u_i)$ and any other ornaments hanging below it.

\textit{(iii)} If $o = \de(v) \setminus \de(u_i)$, then $o$ is a minimal reduction of $\de(v)$ by \textit{(i)} and \textit{(ii)}. To prove the converse, suppose $o$ is a minimal reduction of $\delta(v)$. We may assume without loss of generality that $o = \de(v) \setminus (\de(u_1) \sqcup \dots \sqcup \de(u_\ell))$ for some $\ell < k$. There exists $i\in[\ell]$ such that $u_i \not>_\TT u_j$ for all $j\in[k]\setminus\{i\}$. Then $\de(v) \setminus \de(u_i)$ is a reduction of $\de(v)$ that contains $o$. By the minimality of $o$, we have $o=\de(v)\setminus\de(u_i)$. 
\end{proof}

In the following lemma, we describe how $\Pop$ operates on ornamentations. Parts \textit{(i)} and \textit{(ii)} are two sides of the same coin, showing the bijection between minimal reductions and covered elements in the ornamentation lattice. Part \textit{(iii)} explicitly describes how $\Pop$ is computed.

\begin{lemma}\label{lem:popmain}
    Let $\de \in \OO(\TT)$. 
    \begin{enumerate}[(i)]
        \item For every $v\in\TT$ and every $u\in M_\de(v)$, the ornamentation $\de_u^v$ is covered by $\de$ in $\OO(\TT)$.
        \item For every ornamentation $\de'$ covered by $\de$ in $\OO(\TT)$, there exist $v\in \TT$ and $u\in M_\de(v)$ such that $\de'=\de_u^v$.
        \item For all $v \in \TT$, $$\Pop(\de)(v) = \bigcap_{u\in M_\de(v)} \de_u^v(v).$$ 
    \end{enumerate}
\end{lemma}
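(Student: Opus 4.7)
The plan is to establish the three parts in order: (i) and (ii) together describe the elements covered by $\de$ in $\OO(\TT)$, and (iii) then follows from a routine computation of the meet defining $\Pop(\de)$.

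For (i), the map $\de_u^v$ is an ornamentation by \cref{lem:wrapping_reduction}, and $\de_u^v < \de$ since they agree away from $v$ and $\de_u^v(v) = \de(v) \setminus \de(u)$ is a proper subset of $\de(v)$. To rule out an intermediate $\de''$ with $\de_u^v \le \de'' \le \de$ and $\de'' \ne \de_u^v$, I would note that such a $\de''$ must agree with $\de$ off $v$ and satisfy $\de''(v) = (\de(v) \setminus \de(u)) \cup A$ for some nonempty $A \subseteq \de(u)$. Taking any $y \in A$, connectedness of $\de''(v)$ forces the $\TT$-path from $y$ to $v$---which passes through $u$---to lie in $\de''(v)$. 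Hence $u \in \de''(v) \cap \de''(u)$, and the ornamentation axiom combined with $v \notin \de(u)$ yields $\de(u) \subseteq \de''(v)$, so $\de''(v) = \de(v)$ and $\de'' = \de$.

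Part (ii) is the main challenge. Given $\de' \lessdot \de$, let $S = \{w \in \TT : \de'(w) \subsetneq \de(w)\}$ and choose $v^* \in S$ minimizing $|\de(v^*)|$. Let $u_1, \ldots, u_k$ be the nodes wrapped by $v^*$ in $\de$, so by \cref{rem:ornamentpartition} we have $\de(v^*) = \{v^*\} \sqcup \de(u_1) \sqcup \cdots \sqcup \de(u_k)$. Each $\de(u_j)$ is strictly smaller than $\de(v^*)$, so minimality of $v^*$ forces $\de'(u_j) = \de(u_j)$. The ornamentation property applied to $\de'(v^*)$ and $\de'(u_j) = \de(u_j)$, combined with $v^* \in \de'(v^*) \setminus \de(u_j)$, then implies that each $\de(u_j)$ is either contained in $\de'(v^*)$ or disjoint from it. Thus $\de'(v^*) = \{v^*\} \cup \bigcup_{j \in I} \de(u_j)$ for some $I \subsetneq [k]$. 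Picking $j^* \in [k] \setminus I$, we obtain $\de'(v^*) \cap \de(u_{j^*}) = \emptyset$, and therefore $\de' \le \de_{u_{j^*}}^{v^*} < \de$. The covering hypothesis then forces $\de' = \de_{u_{j^*}}^{v^*}$.

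For (iii), the definition of $\Pop$ gives $(\Pop(\de))(v) = \de(v) \cap \bigcap_{\de' \lessdot \de} \de'(v)$. By (ii), the cover relations below $\de$ are exactly the $\de_u^{v'}$ with $v' \in \TT$ and $u \in M_\de(v')$, and for $v' \ne v$ these satisfy $\de_u^{v'}(v) = \de(v)$ and contribute nothing to the intersection. What remains is $\bigcap_{u \in M_\de(v)} \de_u^v(v)$, which is the claimed formula. I expect the main obstacle to be the argument in (ii): the decisive idea is to choose $v^*$ with $|\de(v^*)|$ minimal in $S$, since this propagates the equality of $\de$ and $\de'$ to all strictly smaller sub-ornaments and thereby forces $\de'(v^*)$ to be a union of complete wrapped pieces $\de(u_j)$.
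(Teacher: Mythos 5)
Your overall strategy is sound, and for part \textit{(ii)} it is genuinely different from the paper's. The paper first proves that a covered element $\de'$ can differ from $\de$ in only one ornament (by ``freezing'' all but one of the changed ornaments to manufacture an intermediate element), and only then analyzes that single ornament. You instead select a changed node $v^*$ with $|\de(v^*)|$ minimal, which automatically forces every maximal subornament of $\de(v^*)$ to be unchanged, and you let the covering hypothesis absorb the rest; you never need to know that $v^*$ is the unique changed node. Your argument for \textit{(i)} is also slightly more direct: you show that any intermediate $\de''$ must equal $\de$ via connectedness of $\de''(v)$ (which forces $u\in\de''(v)$ and hence $\de(u)\subseteq\de''(v)$), whereas the paper derives a contradiction with the minimality of the reduction. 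Both routes buy essentially the same thing; yours trades the paper's ``exactly one ornament changes'' lemma for a minimality-of-$|\de(v^*)|$ selection.

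There is one step in \textit{(ii)} that does not follow as written. From $\de'(v^*)\cap\de(u_{j^*})=\emptyset$ you conclude $\de'\le\de_{u_{j^*}}^{v^*}$, but by definition $\de_{u_{j^*}}^{v^*}(v^*)=\de(v^*)\setminus\Delta_{\de(v^*)}(u_{j^*})$, and $\Delta_{\de(v^*)}(u_{j^*})$ may strictly contain $\de(u_{j^*})$: it also contains every $\de(u_j)$ with $u_j<_\TT u_{j^*}$. So disjointness from $\de(u_{j^*})$ alone does not give $\de'(v^*)\subseteq\de_{u_{j^*}}^{v^*}(v^*)$. The fix is to choose $j^*$ so that $u_{j^*}$ is $\le_\TT$-minimal among $u_1,\dots,u_k$, as the paper does: any $\le_\TT$-minimal element of $\{u_j: j\in[k]\setminus I\}$ is in fact minimal among all the $u_j$, because if some $u_{j'}<_\TT u_{j^*}$ had $j'\in I$, the connectedness of $\de'(v^*)$ would force $u_{j^*}\in\de'(v^*)$ and hence $\de(u_{j^*})\subseteq\de'(v^*)$, contradicting $j^*\notin I$. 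With this choice, $\Delta_{\de(v^*)}(u_{j^*})=\de(u_{j^*})$, your disjointness argument goes through, and \cref{rem:ornamentpartition}\textit{(iii)} gives $u_{j^*}\in M_\de(v^*)$ --- a membership your conclusion requires for the statement of \textit{(ii)} but which you do not verify (alternatively, it follows from the covering relation, since a non-minimal reduction would yield an element strictly between $\de'$ and $\de$). Everything else, including \textit{(iii)}, is fine.
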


\begin{proof}
\textit{(i)} We have $\de_u^v < \de$, as $\de_u^v(w)\subseteq\de(w)$ for all $w\in\TT$. Suppose for the sake of contradiction that an ornamentation $\sigma\in \OO(\TT)\setminus\{\de, \de_u^v\}$ satisfies $\de_u^v < \sigma < \de$. Then $\de_u^v(w) = \sigma(w) = \de(w)$ for all $w\in\TT\setminus\{v\}$, so we must have $\de_u^v(v)\subsetneq\sigma(v)\subsetneq\de(v)$. In accordance with \cref{rem:ornamentpartition}, we write $\de(v) = \{v\} \sqcup \de(u_1) \sqcup \de(u_2) \sqcup \dots \sqcup \de(u_k)$, where $u_1, \dots, u_k$ are all the nodes wrapped by $v$ in $\de$. Because $\sigma(v)$ is a proper subset of $\de(v)$ that must either contain or be disjoint from each ornament $\sigma(u_i)=\de(u_i)$, there must be an index $j$ such that $u_j\in\de(v)\setminus\sigma(v)$. Because $\sigma(v)$ is connected and does not include $u_j$, it must be a subset of $\de(v)\setminus\Delta_{\de(v)}(u_j) = \de_{u_j}^v(v)$. Thus, $\de_u^v(v) \subsetneq \sigma(v) \subseteq \de_{u_j}^v(v)$; this is impossible because $\de_{u_j}^v(v)$ is a reduction of $\de(v)$ and $\de_u^v(v)$ is a minimal reduction of $v$. It follows that $\sigma$ cannot exist, so $\de_u^v$ is covered by $\de$.

\textit{(ii)} Let $\de'$ be an ornamentation covered by $\de$. Suppose by way of contradiction that $\de'$ differs from $\de$ in more than one ornament, so $\de'(v_1) \subsetneq \de(v_1)$ and $\de'(v_2) \subsetneq \de(v_2)$ for some distinct $v_1,v_2$. Assume without loss of generality that $\de(v_1)$ is either disjoint from or contained in $\de(v_2)$. Define a map $\sigma\colon\TT\to\Orn(\TT)$ by letting $\sigma(v_1)=\de'(v_1)$ and $\sigma(w)=\de(w)$ for all $w\neq v_1$. Then $\sigma$ is an ornamentation (because $\de'(v_1)$, as a subset of $\de(v_1)$, must either be nested in or disjoint from $\de(v_2)$). But $\de' < \sigma < \de$, contradicting the fact that $\de$ covers $\de'$. Thus, $\de$ and $\de'$ disagree in exactly one ornament. Let $v$ be the unique node of $\TT$ such that $\de'(v)\neq\de(v)$. Using \cref{rem:ornamentpartition} once again, we write $\de(v) = \{v\} \sqcup \de(u_1) \sqcup \de(u_2) \sqcup \dots \sqcup \de(u_k)$, where $u_1, \dots, u_k$ are all the nodes wrapped by $v$ in $\de$. We may assume without loss of generality that $\de'(v) = \{v\} \sqcup \de(u_1) \sqcup \dots \sqcup \de(u_\ell)$ for some $\ell < k$. There must exist an index $i$ such that $u_i \in \de(v)\setminus\de'(v) = \de(u_{l+1})\sqcup\de(u_{l+2})\sqcup\dots\sqcup\de(u_k)$ and $u_i \not>_\TT u_j$ for all $j \in [k]$. (Any minimal element in $\de(v)\setminus\de'(v)$ is also minimal in $\de(v)$ because $\de'(v)$ is a connected subtree with the same maximal element as $\de(v)$.) Then, the minimal reduction $\de^v_{u_i}$ satisfies $\de'(v) \subseteq \de^v_{u_i}(v) \subsetneq \de(v)$, and since $\de'$ is covered by $\de$, we must have $\de'=\de_{u_i}^v$.

\textit{(iii)} This is immediate from \textit{(i)} and \textit{(ii)} and the definition of $\Pop$. 
\end{proof}

\begin{example}
\cref{fig:enter-label} illustrates how to apply $\Pop$ to an ornamentation. Note that although the ornament $o=\de(v)\setminus\Delta_{\de(v)}(w)$ is a reduction of $v$, it is not minimal because $\de(v)\setminus\Delta_{\de(v)}(w')$ is a reduction of $v$ that contains $o$. This is why $\Pop(\delta)(v)$ contains $\delta(w)$. 
\end{example}

\begin{figure}[ht]
    \centering
    \includegraphics[height=4.314cm]{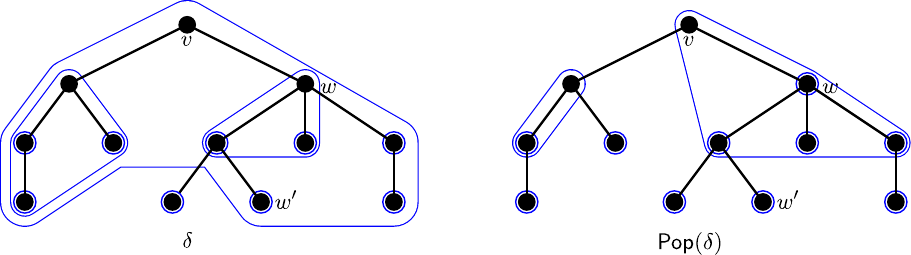}
    \caption{Applying $\Pop$ to the ornamentation on the left yields the ornamentation on the right.
    }
    \label{fig:enter-label}
\end{figure}


Define a \dfn{section} of an ornament $\de(v)$ to be a set of nodes of the form $\Delta_{\de(v)}(u)$ for some ${u\in\ch_{\de(v)}(v)}$. We call $\Delta_{\de(v)}(u)$ the \dfn{$u$-section} of $\de(v)$. Each ornament is the disjoint union of its root and its sections. Also note that a singleton ornament has no sections. The following lemma states that $\Pop$ strictly decreases the size of each section. 
\begin{lemma}\label{rem:monotonic}
    Let $\de \in \OO(\TT)$. For all $v \in \TT$,
    we have $\Pop(\de)(v) \subseteq \de(v)$. Additionally, for each child $u \in \ch_{\de(v)}(v)$, we have $$|\Delta_{\Pop(\de)(v)}(u)| \leq |\Delta_{\de(v)}(u)| - 1.$$ 
\end{lemma}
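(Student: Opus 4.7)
The first containment is immediate: $\Pop(\de)$ is a meet of a set of lattice elements containing $\de$, so $\Pop(\de)\leq\de$, and the partial order on $\OO(\TT)$ is defined pointwise by set containment, giving $\Pop(\de)(v)\subseteq\de(v)$.

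For the section-size bound, I would use \cref{lem:popmain}(iii) in the complementary form
\[\de(v)\setminus\Pop(\de)(v)=\bigcup_{u'\in M_\de(v)}\Delta_{\de(v)}(u').\]
This reduces the task to producing, for each $u\in\ch_{\de(v)}(v)$, some $u'\in M_\de(v)$ with $u'\leq_\TT u$; any such $u'$ lies in $\Delta_{\de(v)}(u)\setminus\Delta_{\Pop(\de)(v)}(u)$, yielding the strict inequality.

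Let $u_1,\dots,u_k$ be the nodes wrapped by $v$ in $\de$. Using the partition of $\de(v)$ given in \cref{rem:ornamentpartition}(i), the child $u$ lies in some $\de(u_j)$, and since $u_j$ is the unique maximum of $\de(u_j)$, we have $u_j\leq_\TT u$; hence the $u$-section contains at least one wrapped node. I would then choose $u^{*}$ to be a wrapped node in $\Delta_{\de(v)}(u)$ that is minimal with respect to $\leq_\TT$, and claim $u^{*}\in M_\de(v)$. By the characterization of minimal reductions in \cref{rem:ornamentpartition}(iii), this amounts to verifying $\Delta_{\de(v)}(u^{*})=\de(u^{*})$; if some $x\in\Delta_{\de(v)}(u^{*})\setminus\de(u^{*})$ existed, it would lie in a different maximal subornament $\de(u_l)$, and the nested/disjoint property together with the maximality of $\de(u_l)$ among ornaments wrapped by $v$ would force $u_l<_\TT u^{*}$. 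But then $u_l$ would lie in the $u$-section strictly below $u^{*}$, contradicting the minimal choice of $u^{*}$.

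The only delicate point is the identification of $M_\de(v)$ as precisely the set of wrapped nodes that are minimal under $\leq_\TT$; once this is in hand, the witness $u^{*}$ produces the desired strict inequality $|\Delta_{\Pop(\de)(v)}(u)|\leq|\Delta_{\de(v)}(u)|-1$.
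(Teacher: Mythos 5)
Your proposal is correct and follows essentially the same route as the paper: the first claim from $\Pop(\de)\leq\de$, and the second by locating in each section a wrapped node that is minimal under $\leq_\TT$, verifying via \cref{rem:ornamentpartition} that it lies in $M_\de(v)$, and concluding from \cref{lem:popmain}(iii) that $\Pop$ removes it. Your write-up is somewhat more explicit than the paper's about why a minimal wrapped node in a section yields a minimal reduction, but the argument is the same.
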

\begin{proof}
The first claim is immediate since $\Pop(\de)\leq\de$. Let $\de(v) = \{v\} \sqcup \de(u_1) \sqcup \de(u_2) \sqcup \dots \sqcup \de(u_k)$ be the decomposition of $\de(v)$ into its maximal subornaments. Each section $\lambda$ of $\de(v)$ must contain at least one node $w_\lambda\in\{u_1,u_2,\ldots,u_k\}$ that is minimal among $u_1, u_2, \dots, u_k$ in the partial order $\leq_\TT$. Note that $w_\lambda$ must be such that $\de(v)\setminus\de(w_\lambda)$ is a minimal reduction of $\de(v)$. By part \textit{(iii)} of \cref{lem:popmain}, we know that $\Pop(\de)(v)$ does not contain $w_\lambda$. 
\end{proof}

Define the $\dfn{weight}$ of an ornamentation $\de\in\OO(\TT)$ to be $\WT(\de) = \sum_{v\in \TT} |\de(v)|$. If $\de' \leq \de$, then ${\WT(\de') \leq \WT(\de)}$, where equality holds if and only if $\de = \de'$. Thus, if $\de' \leq \de$ and $\WT(\de') = \WT(\de) - 1$, then $\de$ must cover $\de'$.

\section{Maximum Forward Orbit Sizes}\label{sec:maxorbit}

In this section, we prove \cref{thm:forward_orbit}, which determines the maximum possible size of a forward orbit of the pop-stack operator on an ornamentation lattice. Recall that $\rt$ denotes the root of the tree $\TT$. The formula for $\max_{\de\in \OO(\TT)}|\Orb_{\Pop}(\de)|$ stated in \cref{thm:forward_orbit} has the following equivalent expressions:
\begin{equation}\label{eq:equivalent} 
\max_{u\in \ch_\TT(\rt)}\rk_{\rt}(u) + 2 = \max_{C\in\MM_\TT}(|C| + f_C(\rt))
=  \max_{C\in\MM_\TT}\min_{u\in C\setminus\{\rt\}}(|\Delta_\TT(u)| + 2\depth(u) - 1).
\end{equation} 
The first equality follows from the fact that \[\max_{u\in \ch_\TT(\rt)}\rk_\rt(u) + 2 = \max_{u\in \ch_\TT(\rt)}\height_{\TT^*_\rt}(u) + 2 = \height_{\TT^*_\rt}(\rt) + 1\] and the observation that $\height_{\TT^*_\rt}(\rt) + 1$ is equal to the maximum size of a chain in $\TT^*_\rt$. The second equality in \eqref{eq:equivalent} is a straightforward consequence of \eqref{eq:fC2}.

\subsection{The Lower Bound}

Let $\TT$ be a rooted tree with root $\rt$. We will first construct an ornamentation $\de^\dagger\in\OO(\TT)$ whose forward orbit under $\Pop$ has size ${\max_{C\in\MM_\TT}(|C|+f_C(\rt))}$ (which is equal to the claimed maximum forward orbit size by \eqref{eq:equivalent}). Let $C^*$ be a maximal chain of $\TT$ such that $|C^*|+f_{C^*}(\rt)=\max_{C\in\MM_\TT}(|C|+f_C(\rt))$, and let $f_{C^*}(\rt) = k$.

Let $v_0, v_1, \dots, v_{|C^*| - 1}$ be the nodes of $C^*$ ordered from top to bottom; note that $v_0=\rt$. For $0\leq i\leq |C^*|-2$, let $B_i=\{u\in\TT\setminus C^*:u\leq_\TT v_i\text{ and }u\not\leq_\TT v_{i+1}\}$. We call $B_i$ the $i$-th \dfn{bough} of $C^*$. Note that boughs may be empty. Moreover, $B_0,B_1,\ldots,B_{|C^*|-2}$ form a partition of $\TT\setminus C^*$. 

Define a partial order $\sqsubseteq$ on $\TT\setminus C^*$ as follows. Suppose $x\in B_i$ and $y\in B_j$. If $i>j$, then $x\sqsubseteq y$. If $i=j$, then $x\sqsubseteq y$ if and only if $x\geq_\TT y$. Let $v_{|C^*|},v_{|C^*|+1},\ldots,v_{|\TT|-1}$ be a linear extension of the poset $(\TT\setminus C^*,\sqsubseteq)$; that is, $v_{|C^*|},v_{|C^*|+1},\ldots,v_{|\TT|-1}$ is an ordering of $\TT\setminus C^*$ such that $a\leq b$ whenever $v_a\sqsubseteq v_b$. 




\begin{figure}[ht]
\begin{center}{\includegraphics[height=8.986cm]{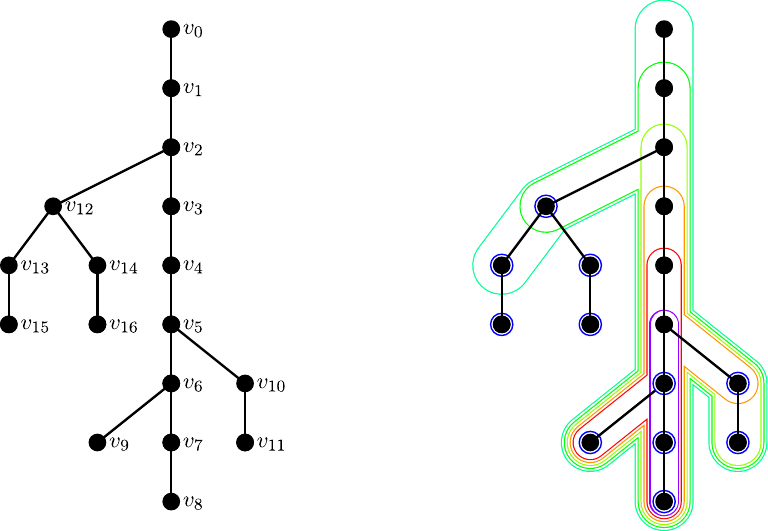}}
\end{center}
\caption{On the left is a tree $\TT$ with a maximal chain $C^*$ consisting of nodes $v_0,v_1,\ldots,v_8$. We have $k=f_{C^*}(\rt)=5$. On the right is the ornamentation $\de^\dagger$. } 
\label{fig:maximum_construction}
\end{figure}

\begin{lemma}\label{lem:descendants} 
    For each nonnegative integer $i<k$, the nodes $v_{|C^*|}, v_{|C^*| + 1}, \dots, v_{|C^*|+k-i-1}$ are descendants of $v_{i+1}$.
\end{lemma}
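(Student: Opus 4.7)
The plan is to exploit the inequality that the hypothesis $f_{C^*}(\rt)=k$ imposes on the structure of $\TT$ via the closed form \eqref{eq:fC2}, translate this into a lower bound on the cumulative sizes of the lower boughs, and then invoke the design of the linear extension $\sqsubseteq$ to locate where in the ordering $v_{|C^*|},v_{|C^*|+1},\ldots,v_{|\TT|-1}$ the descendants of $v_{i+1}$ must lie.

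Concretely, first I would use \eqref{eq:fC2} applied to $v=\rt$ (for which $\depth_\TT(\rt)=0$) to rewrite $f_{C^*}(\rt)=k$ as the bound
\[
|\Delta_\TT(u)|+2\depth_\TT(u)\geq |C^*|+k+1
\]
for every $u\in C^*\setminus\{\rt\}$. Taking $u=v_j$ with $j\geq 1$ (so $\depth_\TT(v_j)=j$) yields $|\Delta_\TT(v_j)|\geq |C^*|+k+1-2j$.

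Next I would decompose $\Delta_\TT(v_j)$ into its pieces inside and outside $C^*$. The chain contributes the $|C^*|-j$ nodes $v_j,v_{j+1},\ldots,v_{|C^*|-1}$, and the off-chain descendants are partitioned by the boughs: every off-chain descendant of $v_j$ belongs to some $B_l$ with $l\geq j$, and conversely every node of $B_l$ with $l\geq j$ satisfies $\leq_\TT v_l\leq_\TT v_j$. Therefore
\[
|\Delta_\TT(v_j)|=(|C^*|-j)+\sum_{l=j}^{|C^*|-2}|B_l|,
\]
and combining with the previous inequality gives $\sum_{l=j}^{|C^*|-2}|B_l|\geq k+1-j$. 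Specializing $j=i+1$ produces
\[
\sum_{l=i+1}^{|C^*|-2}|B_l|\geq k-i.
\]
(A short sanity check: plugging $u=v_{|C^*|-1}$, which is a leaf, into the bound gives $k\leq|C^*|-2$, so $i+1\leq|C^*|-2$ whenever $i<k$, and the sum above is nonempty.)

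Finally I would combine this cardinality bound with the definition of $\sqsubseteq$: by construction, any node of $B_l$ with $l>j$ precedes any node of $B_j$ in the linear extension, so the initial segment $v_{|C^*|},v_{|C^*|+1},\ldots,v_{|\TT|-1}$ of length $\sum_{l=i+1}^{|C^*|-2}|B_l|$ is contained in $B_{i+1}\cup B_{i+2}\cup\cdots\cup B_{|C^*|-2}$. Since this segment has length at least $k-i$, the first $k-i$ nodes $v_{|C^*|},\ldots,v_{|C^*|+k-i-1}$ all lie in $\bigcup_{l\geq i+1}B_l$, and any such node is $\leq_\TT v_l\leq_\TT v_{i+1}$, hence a descendant of $v_{i+1}$.

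The only step demanding genuine care is the translation between $f_{C^*}(\rt)=k$ and the cumulative bough inequality; everything else is structural bookkeeping about $\sqsubseteq$. I expect the ``hard'' part to be merely getting the accounting right in the decomposition of $|\Delta_\TT(v_j)|$ and being explicit that $l=j$ is included in the bough sum (so that we count exactly the off-chain descendants of $v_j$, not just those strictly below $v_{j+1}$).
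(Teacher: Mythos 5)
Your proof is correct and follows essentially the same route as the paper: both arguments reduce the claim to the cardinality bound $|B_{i+1}\sqcup B_{i+2}\sqcup\cdots\sqcup B_{|C^*|-2}|\geq k-i$ and then conclude via the definition of $\sqsubseteq$ and the linear extension. The only difference is that you extract this bound from the closed form \eqref{eq:fC2} together with a direct count of $|\Delta_\TT(v_{i+1})|$, whereas the paper walks up the chain using the recursion \eqref{eq:fC1} to obtain $b_{C^*}(v_{i+1})\geq f_{C^*}(v_i)\geq k-i$; since $b_{C^*}(v_{i+1})=\sum_{l=i+1}^{|C^*|-2}|B_l|$, these are the same inequality.
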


\begin{proof} 
Recall from \eqref{eq:fC1} that $f_{C^*}$ takes the value $0$ at the bottom of the chain $C^*$ and increases by $0$ or $1$ whenever we move one step up along the chain. Furthermore, since $v_{|C^*|-1}$ is a leaf, we have $b_{C^*}(v_{|C^*|-1})=0$, so $f_{C^*}(v_{|C^*|-2})=0$. It follows that $f_{C^*}(v_i) \leq \height_{C^*}(v_i) - 1$ for all $0\leq i<k$. In particular, $k = f_{C^*}(\rt) \leq |C^*| - 2$. This shows that $v_{i+1}$ actually is an element of $C^*$. By a similar argument, we have $f_{C^*}(v_i) \geq f_{C^*}(v_0) - i = k - i$. We also know from \eqref{eq:fC1} that $f_{C^*}(v) \leq b_{C^*}(\ch_{C^*}(v))$ for all $v \in C^*$. Because $\ch_{C^*}(v_i) = v_{i+1}$, this implies that $b_{C^*}(v_{i+1}) \geq f_{C^*}(v_i) \geq k - i$. Therefore, it follows from the definition of $b_{C^*}$ that there are at least $k - i$ descendants of $v_{i+1}$ that do not lie on $C^*$. In other words, $|B_{i+1}\sqcup B_{i+2}\sqcup\cdots\sqcup B_{|C^*|-2}|\geq k-i$. Since $v_{|C^*|}, v_{|C^*| + 1}, \dots, v_{|\TT|-1}$ is a linear extension of $(\TT\setminus C^*,\sqsubseteq)$, the desired result follows from the definition of $\sqsubseteq$.  
\end{proof}

Now, define $\de^\dagger\colon\TT\to\Orn(\TT)$ as follows. For $0\leq i\leq k$, let $\de^\dagger(v_i)=\{v_i,v_{i+1},\ldots,v_{|C^*|+k-1-i}\}$; for $v\not\in\{v_0,v_1,\ldots,v_k\}$, let $\de^\dagger(v)=\{v\}$. See \cref{fig:maximum_construction} for an example. Note that the definition of the partial order $\sqsubseteq$ on nodes within the same bough 
ensures that each set $\delta^\dagger(v_i)$ induces a connected subgraph of $\TT$. Using \cref{lem:descendants}, one can readily check that $\de^\dagger$ is an ornamentation. 

\begin{lemma}\label{lem:Pop_delta_dagger}
For all integers $0\leq i\leq k$ and $p\geq 0$, we have \[\Pop^p(\delta^\dagger)(v_i)=\{v_i\}\cup\{v_j:i+1\leq j\leq |C^*|+k-1-i-p\}.\] 
\end{lemma}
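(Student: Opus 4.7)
The plan is induction on $p$. The base case $p=0$ is the definition of $\delta^\dagger$. For the inductive step, assume the formula at level $p$. Since $\Pop$ never enlarges an ornament (\cref{rem:monotonic}), every singleton ornament is fixed by $\Pop$, and in particular $\Pop^{p+1}(\delta^\dagger)(v)=\{v\}$ for each $v\in\TT\setminus\{v_0,\ldots,v_k\}$. It therefore suffices to compute $\Pop^{p+1}(\delta^\dagger)(v_i)$ for $0\le i\le k$.

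Fix $i$, and write $m=|C^*|+k-1-i-p$. If $m<i+1$, the inductive hypothesis gives $\Pop^p(\delta^\dagger)(v_i)=\{v_i\}$, which is fixed by $\Pop$ and matches the formula at step $p+1$. So assume $m\ge i+1$, so that $\Pop^p(\delta^\dagger)(v_i)=\{v_i,v_{i+1},\ldots,v_m\}$ by hypothesis. I will identify the nodes wrapped by $v_i$ in $\Pop^p(\delta^\dagger)$ and show that $v_m$ is the unique $\le_\TT$-minimal one and is a leaf of this ornament. Given this, \cref{rem:ornamentpartition}(iii) will imply that the unique minimal reduction of $\Pop^p(\delta^\dagger)(v_i)$ is obtained by removing $\Delta_{\Pop^p(\delta^\dagger)(v_i)}(v_m)=\{v_m\}$, so \cref{lem:popmain}(iii) yields
\[\Pop^{p+1}(\delta^\dagger)(v_i)=\{v_i,v_{i+1},\ldots,v_m\}\setminus\{v_m\}=\{v_i,v_{i+1},\ldots,v_{m-1}\},\]
matching the desired formula.

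To identify the wrapped nodes, I will use the inductive description of the nested chain $\Pop^p(\delta^\dagger)(v_{i+1})\subsetneq\Pop^p(\delta^\dagger)(v_i)$ (when $i<k$), together with the fact that the ornaments at all other nodes of $\TT$ are singletons. Inspecting which ornaments of $\Pop^p(\delta^\dagger)$ are immediately nested below $\Pop^p(\delta^\dagger)(v_i)$ shows that the wrapped nodes are $\{v_{i+1},v_m\}$ when $i<k$ (these collapse to a single node when $m=i+1$), and $\{v_{k+1},v_{k+2},\ldots,v_m\}$ when $i=k$. To see that $v_m$ is $\le_\TT$-smaller than every other wrapped node: when $v_m\in C^*$, all other wrapped nodes lie above $v_m$ on the chain $C^*$; when $v_m\notin C^*$ (so $m\ge|C^*|$), \cref{lem:descendants} places $v_m$ strictly below $v_{i+1}$ in $\le_\TT$.

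Finally, I must verify that $v_m$ is a leaf of $\Pop^p(\delta^\dagger)(v_i)$. When $v_m\in C^*$ the ornament is a sub-chain of $C^*$ ending at $v_m$, so this is clear. When $v_m\notin C^*$, it lies in some bough $B_j$, and every descendant of $v_m$ in $\TT$ also lies in $B_j$ (a strict descendant of a non-$C^*$ node can never reach $C^*$); moreover, the restriction of $\sqsubseteq$ to $B_j$ is the reverse of $\le_\TT$, so strict descendants of $v_m$ in $B_j$ are $\sqsubseteq$-larger than $v_m$ and hence have linear-extension index strictly greater than $m$. They therefore lie outside the first $k-i-p$ off-$C^*$ nodes, and so outside $\Pop^p(\delta^\dagger)(v_i)$. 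The main obstacle will be this leaf verification, which requires a careful use of the definition of $\sqsubseteq$ on boughs; the rest of the case analysis (on whether $v_m$ lies on or off $C^*$, and whether $m=i+1$) is routine but must be tracked attentively.
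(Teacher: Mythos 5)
Your proposal is correct and follows essentially the same route as the paper's proof: induct on $p$, identify the nodes wrapped by $v_i$ in $\Pop^p(\delta^\dagger)$, use \cref{lem:descendants} to show $v_m$ is the unique $\leq_\TT$-minimal one (so the reduction by $v_{i+1}$ is not minimal), and conclude via \cref{lem:popmain} that exactly $\{v_m\}$ is removed. You are somewhat more explicit than the paper about the case $i=k$ and about verifying that $v_m$ is a leaf of the ornament via the definition of $\sqsubseteq$ on boughs (the paper leaves these points to \cref{rem:ornamentpartition}), but the substance of the argument is the same.
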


\begin{proof} 
If $p=0$, then the desired result is immediate from the definition of $\delta^\dagger$. Thus, we may assume $p\geq 1$ and proceed by induction on $p$. If $i+1>|C^*|+k-1-i-p$, then we know by induction that $\Pop^{p-1}(\de^\dagger)(v_i)\subseteq\{v_i,v_{i+1}\}$, so it follows from \cref{rem:monotonic} that $\Pop^p(\de^\dagger)(v_i)=\{v_i\}$, as desired. Now assume $i+1\leq |C^*|+k-1-i-p$. For convenience, let $m=|C^*|+k-i-p$. By induction, we have $\Pop^{p-1}(\de^\dagger)(v_i)=\{v_i,v_{i+1},\ldots,v_m\}$, $\Pop^{p-1}(\de^\dagger)(v_{i+1})=\{v_{i+1},\ldots,v_{m-1}\}$, and $\Pop^{p-1}(\de^\dagger)(v_m)=\{v_{m}\}$. It follows that the only ornaments wrapped by $\Pop^{p-1}(\de^\dagger)(v_i)$ in $\Pop^{p-1}(\de^\dagger)$ are $\{v_{i+1},\ldots,v_{m-1}\}$ and $\{v_m\}$. If $i+1\leq m\leq |C^*|-1$, then $v_m$ is a descendant of $v_{i+1}$ because both $v_{i+1}$ and $v_{m}$ are in the chain $C^*$. On the other hand, if $m\geq|C^*|$, then it follows from \cref{lem:descendants} that $v_m$ is a descendant of $v_{i+1}$. In either case, $v_m$ is a descendant of $v_{i+1}$, so we deduce that the reduction of $\Pop^{p-1}(\de^\dagger)(v_i)$ by the node $v_{i+1}$ is not a minimal reduction. Hence, $M_{\Pop^{p-1}(\de^\dagger)}(v_i)=\{v_m\}$. It follows from \cref{lem:popmain} that \[\Pop^p(\de^{\dagger})(v_i)=\Pop^{p-1}(\de^\dagger)(v_i)\setminus\{v_m\}=\{v_i,\ldots,v_{m-1}\},\] as desired. 
\end{proof} 

It follows from \cref{lem:Pop_delta_dagger} that $\Pop^{|C^*|+k-1}(\delta^\dagger)(v)=\{v\}$ for all $v\in\TT$, so $\Pop^{|C^*|+k-1}(\de^\dagger)$ is the minimum element of $\OO(\TT)$. This lemma also tells us that $\Pop^{|C^*|+k-2}(\de)(v_0) = \{v_0, v_1\}$. Hence, 
\begin{equation}\label{eq:delta_dagger_maximizes}
|\Orb_{\Pop}(\de^\dagger)|=|C^*|+k=\max_{C\in\mathcal M_{\TT}}(|C|+f_C(\rt)). 
\end{equation}

\subsection{Upper bound}

We now seek to derive a tight upper bound for $\max_{\de\in \OO(\TT)}|\Orb_{\Pop}(\de)|$. To do this, we will first prove that for $\de\in\Pop^k(\OO(\TT))$ and $v\in\TT$, every node in $\de(v)$ has $v$-rank at least $k$. When $k$ is strictly greater than the maximum of $\rk_u(v)$ over all distinct $u,v\in\TT$, this will imply that the only ornamentation in $\Pop^k(\OO(\TT))$ is the minimum ornamentation $\de_{\min}$. Therefore, in light of \eqref{eq:maximize_rank}, it will follow that 
\[\max_{\de\in \OO(\TT)}|\Orb_{\Pop}(\de)|\leq\max_{u\in\ch_\TT(\rt)}\rk_{\rt}(u)+2.\] Combining this with \eqref{eq:equivalent} and \eqref{eq:delta_dagger_maximizes} will then complete the proof of \cref{thm:forward_orbit}. 

\begin{proposition}
\label{lem:vrank}
 Let $\de \in \Pop^k(\OO(\TT))$. For all $v\in\TT$ and $u\in\de(v)$, we have $\rk_v(u)\geq k$. 
\end{proposition}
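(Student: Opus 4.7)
The plan is to induct on $k$. For the base case $k=0$, observe that $\de(v) \subseteq \Delta_\TT(v)$ since $v$ is the unique maximum of the ornament $\de(v)$; hence for $u \in \de(v)$ with $u \neq v$ we have $\rk_v(u) = \height_{\TT_v^*}(u) \geq 0$.

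For the inductive step, write $\de = \Pop(\de')$ with $\de' \in \Pop^{k-1}(\OO(\TT))$ and fix $u \in \de(v)$ with $u \neq v$. The main idea is to exhibit a node $u^- \in \de'(v)$ that is a strict descendant of $u$ in $\TT$. Once $u^-$ is in hand, the inductive hypothesis gives $\rk_v(u^-) \geq k-1$; combining this with the height inequality $\height_{\TT_v^*}(u) \geq \height_{\TT_v^*}(u^-) + 1$, which holds because $u^- <_\TT u$ and the chains appended to form $\TT_v^*$ sit below the leaves of $\TT$, will upgrade the bound to $\rk_v(u) \geq k$.

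To produce $u^-$, I will exploit the wrapping structure of $\de$. Let $w$ be the node such that $\de(w)$ is the minimal ornament of $\de$ properly containing $\de(u)$; this is well-defined since $\de(u) \subsetneq \de(v)$ by laminarity. Then $u$ is wrapped by $w$ in $\de$, so by \cref{rem:ornamentpartition} $u$ is a child of $w$ in $\TT$, $u \in \de(w)$, and $\de(u) = \Delta_{\de(w)}(u)$. Since $\de(w) \subseteq \de'(w)$, the same characterization shows that $u$ is wrapped by $w$ in $\de'$ as well, giving $\de'(u) = \Delta_{\de'(w)}(u)$. Applying \cref{rem:monotonic} to $\de'$ at the node $w$ then yields
\[|\de(u)| = |\Delta_{\de(w)}(u)| \leq |\Delta_{\de'(w)}(u)| - 1 = |\de'(u)| - 1,\]
so $\de'(u) \setminus \de(u) \neq \emptyset$. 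Any element $u^-$ of this difference lies in $\Delta_\TT(u)\setminus\{u\}$, hence $u^- <_\TT u$ strictly, and the laminar nestings $\de'(u) \subseteq \de'(w) \subseteq \de'(v)$ place $u^- \in \de'(v)$, as required. Since $u^- <_\TT u \leq_\TT v$ also forces $u^- \neq v$, the inductive hypothesis applies to $u^-$ and the argument closes.

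The key step to get right is the application of \cref{rem:monotonic} at the wrapping node $w$ of $u$ in $\de$ rather than at $v$ itself: the strict shrinking of the $u$-section of $\de'(w)$ is precisely what forces a node to disappear from $\de'(u)$, producing the strict descendant $u^-$ of $u$ from which the inductive hypothesis extracts the extra unit of rank. Everything else is direct bookkeeping with laminarity.
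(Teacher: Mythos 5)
Your overall inductive frame is fine, but the central step---producing a strict descendant $u^-$ of $u$ with $u^-\in\de'(v)$---is not merely unproven; it is false, and the argument collapses with it. If such a $u^-$ always existed, then iterating the claim through $\de=\Pop(\de_1)$, $\de_1=\Pop(\de_2)$, etc.\ would produce a chain of $k$ strict descendants of $u$ in $\TT$, i.e.\ you would be proving the stronger statement $\height_\TT(u)\geq k$ for every $u\in\de(v)\setminus\{v\}$. That stronger statement is false, which is exactly why the proposition is phrased in terms of $\rk_v(u)=\height_{\TT_v^*}(u)$ rather than $\height_\TT(u)$: the chains appended in $\TT_v^*$ (governed by $f_C$, i.e.\ by the boughs hanging off the maximal chains through $u$) account for the extra applications of $\Pop$ that a node can survive even after it has no descendants left to lose. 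Concretely, let $\TT$ have root $r$ with a single child $a$, and let $a$ have two children $u$ and $b$. Take $\de'(r)=\{r,a,u,b\}$, $\de'(a)=\{a,u\}$, $\de'(u)=\{u\}$, $\de'(b)=\{b\}$. The nodes wrapped by $r$ in $\de'$ are $a$ and $b$; since $b<_\TT a$, the reduction of $\de'(r)$ by $a$ is $\{r\}$, which is contained in the reduction by $b$, so the only minimal reduction is $\de'(r)\setminus\{b\}$ and $\Pop(\de')(r)=\{r,a,u\}$. The leaf $u$ lies in $\Pop(\de')(r)$ but has no descendants at all, so no $u^-$ exists. (The proposition still holds here: $f_C(r)=1$ for the chain $C=\{r,a,u\}$ because of the bough $\{b\}$, so $\rk_r(u)=\height_{\TT_r^*}(u)=1$.)

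The intermediate claims you use to manufacture $u^-$ are also incorrect. \cref{rem:ornamentpartition} does not say that a node wrapped by $w$ is a child of $w$ in $\TT$ (in the example, $r$ wraps the grandchild $b$), nor that $\de(u)=\Delta_{\de(w)}(u)$ (this fails whenever another maximal subornament of $\de(w)$ hangs below $\de(u)$); and wrapping is not preserved in passing from $\de$ to the larger ornamentation $\de'$ (in the example, $r$ wraps $u$ in $\de=\Pop(\de')$, but in $\de'$ it is $a$ that wraps $u$). Consequently \cref{rem:monotonic}, which only controls the sections $\Delta_{\de'(w)}(u')$ for $u'\in\ch_{\de'(w)}(w)$, cannot be invoked to conclude $|\de'(u)|>|\de(u)|$. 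The paper's proof proceeds quite differently at this point: it shows that a node $u\in\de'(v)$ with $\rk_v(u)=k-1$ must be minimal in $\de'(v)$ with $\de'(u)=\{u\}$, handles separately the case where $v$ wraps $u$ in $\de'$ (then $u$ is popped) and the case where some intermediate $v'$ intervenes, and in the latter case derives a contradiction by comparing $f_{C^*}$ and $b_{C^*}$ along a carefully chosen maximal chain with the $k$-fold shrinking of the relevant section guaranteed by \cref{rem:monotonic}. That quantitative bookkeeping with $f_C$ is precisely the ingredient your argument is missing.
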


\begin{proof}
The proof is trivial when $k=0$, so we may assume $k\geq 1$ and proceed by induction on $k$. Since $\de\in\Pop^{k}(\OO(\TT))$, we can write $\de=\Pop(\de')$ for some $\de'\in\Pop^{k-1}(\OO(\TT))$. Suppose $u,v\in\TT$ are such that $\rk_v(u)\leq k-1$; our aim is to show that $u\not\in\de(v)$. This is certainly true if $u\not\in\de'(v)$ since $\de(v)\subseteq\de'(v)$, so we may assume $u\in\de'(v)$. By induction, this implies that $\rk_v(u)=k-1$. 

It is immediate from the definition of $v$-rank that $\rk_v(u')<\rk_v(u)$ for all descendants $u'$ of $u$. By induction, none of the descendants of $u$ are in $\de'(v)$. Hence, $u$ is a minimal element of $\de'(v)$. Furthermore, $\de'(u)$ must be nested in $\de'(v)$, so $\de'(u)=\{u\}$. If $v$ wraps $u$ in $\de'$, then $u\in M_{\de'}(v)$, so it follows from \cref{lem:popmain} that $u\not\in\Pop(\de')(v)=\de(v)$, as desired. Hence, we may assume that $v$ does not wrap $u$ in $\de'$. This means that there exists a node $v'$ such that $u<_\TT v'<_\TT v$ and $u\in\de'(v')$. 



By the induction hypothesis, we have $\rk_{v'}(u) \geq k-1$. Referring to \cref{vrankdef}, we find that ${\rk_v(u) = \height_{\TT^*_v}(u) = k-1}$ and $\rk_{v'}(u) = \height_{\TT^*_{v'}}(u) \geq k-1$. Using the definitions of height and of the trees $\TT^*_v$ and $\TT^*_{v'}$, we find that
\begin{equation}\label{eq:height1} \height_{\TT^*_v}(u) = \max_{\substack{C\in\MM_{\TT^*_v} \\ C\ni u}}\height_C(u)=\max_{\substack{C\in\MM_{\TT} \\ C\ni u}}(\height_C(u) + f_C(v)) = k-1  
\end{equation} 
and 
\begin{equation}\label{eq:height2}
\height_{\TT^*_{v'}}(u) = \max_{\substack{C\in\MM_{\TT^*_{v'}} \\ C\ni u}}\height_C(u)=\max_{\substack{C\in\MM_{\TT} \\ C\ni u}}(\height_C(u) + f_C(v')) \geq k-1. 
\end{equation}

Let $C^*\in\MM_\TT$ be a maximal chain containing $u$ such that \[\height_{C^*}(u)+f_{C^*}(v')=\max_{\substack{C\in\MM_{\TT} \\ C\ni u}}(\height_C(u) + f_C(v')).\] Since the values of $f_{C^*}$ weakly increase as we move up the chain $C^*$, we have ${f_{C^*}(v')\leq f_{C^*}(v)}$. It follows from \eqref{eq:height1} and \eqref{eq:height2} that 
\[k-1 \leq \height_{C^*}(u) + f_{C^*}(v') \leq \height_{C^*}(u) + f_{C^*}(v) \leq \max_{\substack{C\in\MM_\TT \\ C\ni u}}\height_C(u) + f_C(v) = k-1,\] so we have $k-1 = \height_{C^*}(u) + f_{C^*}(v)$ and $f_{C^*}(v)=f_{C^*}(v')$. Since, once again, the values of $f_{C^*}$ weakly increase as we move up $C^*$, it follows that $f_{C^*}(v)=f_{C^*}(w)$, where $w=\ch_{C^*}(v)$. Recalling the definition of $f_{C^*}$ from \eqref{eq:fC1}, we deduce that $f_{C^*}(v)\geq b_{C^*}(w)$. Consequently, 
\begin{align}
\nonumber k-1&=\height_{C^*}(u)+f_{C^*}(v) \\ 
\nonumber &\geq\height_{C^*}(u)+b_{C^*}(w) \\ 
\label{eq:k-1_height}&=\height_{C^*}(u) + (|\Delta_\TT(w)| - \height_{C^*}(w) - 1). 
\end{align}

Now, suppose by way of contradiction that $u\in\de(v)$. The set $\Delta_{\de(v)}(w)$ induces a connected subgraph of $\TT$ that contains both $u$ and $w$, so 
\begin{equation}\label{eq:done}
|\Delta_{\delta(v)}(w)| \geq \height_{C^*}(w) - \height_{C^*}(u) + 1.
\end{equation} 
Since $\de\in\Pop^k(\OO(\TT))$, we can write $\de=\Pop^k(\sigma)$ for some ornamentation $\sigma$. We know by \cref{rem:monotonic} that 
\[|\Delta_{\Pop^{i+1}(\sigma)(v)}(w)|\leq|\Delta_{\Pop^i(\sigma)(v)}(w)|-1\] for all $0\leq i\leq k-1$. Therefore, 
\[|\Delta_{\de(v)}(w)|=|\Delta_{\Pop^{k}(\sigma)(v)}(w)|\leq |\Delta_{\sigma(v)}(w)|-k\leq|\Delta_\TT(w)|-k.\] But \eqref{eq:k-1_height} tells us that 
\[|\Delta_\TT(w)|-k\leq\height_{C^*}(w)-\height_{C^*}(u),\] and this contradicts \eqref{eq:done}. 
%
\end{proof} 



As mentioned above, \eqref{eq:equivalent}, \eqref{eq:delta_dagger_maximizes}, and \cref{lem:vrank} together imply \cref{thm:forward_orbit}. 

\section{The Image of $\Pop$}\label{sec:image}  
Our goal in this section is to characterize the image of the pop-stack operator on the ornamentation lattice $\OO(\TT)$. This characterization, which we describe in \cref{cla:imageOfPop}, is given by imposing a simple local criterion on each ornament in an ornamentation.

Let $\TT \in \PT_n$ and $\de \in \OO(\TT)$. For convenience, we add to $\TT$ an extra \emph{imaginary node} $\omega$, and we let $\de(\omega) = \{\omega\}\cup\TT$. We make the convention that $\omega$ is the parent of the root $\rt$ and that $\rt$ is the unique child of $\omega$. We impose that the ornament hung at $\omega$ is left unaffected by $\Pop$. As before, we say a node $v$ \dfn{wraps} a node $u$ in $\de$ if $\de(u)\subseteq\de(v)$ and there does not exist a node $w$ such that $\de(u)\subsetneq\de(w)\subsetneq \de(v)$. We then say that $v$ \dfn{hugs} $u$ in $\de$ if $v$ wraps $u$ and there exists a child $u' \in \ch_{\de(u)}(u)$ such that $\Delta_{\de(u)}(u') = \Delta_{\de(v)}(u')$. 
Note that nodes whose ornaments are singletons can not be hugged, and that a node may be hugged by the imaginary node $\omega$ under our definition. We do not draw $\omega$ in diagrams. 

\begin{lemma}\label{lem:disjointornested}
    Let $\de \in \OO(\TT)$. For any two nodes $u, v \in \TT\cup\{\omega\}$, the ornaments $\de(u)$ and $\Pop(\de)(v)$ are either nested or disjoint. 
\end{lemma}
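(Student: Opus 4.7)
The plan is to exploit the decomposition of $\de(v)$ into maximal subornaments from \cref{rem:ornamentpartition}(i) together with the explicit formula for $\Pop(\de)(v)$ from \cref{lem:popmain}(iii), then run a short case analysis. The trivial cases involving $\omega$ are disposed of immediately: since $\de(\omega) = \{\omega\} \cup \TT$ contains every ornament of $\de$ and $\Pop(\de)(\omega) = \de(\omega)$ by convention, nestedness is automatic when $u = \omega$ or $v = \omega$, so I may assume $u, v \in \TT$.

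Because $\de$ is an ornamentation, $\de(u)$ and $\de(v)$ are themselves nested or disjoint, and \cref{rem:monotonic} gives $\Pop(\de)(v) \subseteq \de(v)$. If $\de(u) \cap \de(v) = \emptyset$, or if $\de(v) \subseteq \de(u)$, then the conclusion is immediate. The only nontrivial case is $\de(u) \subsetneq \de(v)$; this forces $u \neq v$ and hence $v \notin \de(u)$ (as $v$ is the unique maximum of $\de(v)$). Writing $\de(v) = \{v\} \sqcup \de(u_1) \sqcup \cdots \sqcup \de(u_k)$ as in \cref{rem:ornamentpartition}(i), the node $u$ lies in a unique piece $\de(u_i)$, and since $\de(u)$ and $\de(u_i)$ are non-disjoint ornaments, comparing their maxima forces $\de(u) \subseteq \de(u_i)$.

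It remains to track how $\Pop$ acts on the pieces. Unpacking the proof of \cref{rem:ornamentpartition}(iii) shows that $M_\de(v)$ consists precisely of those $u_j$ that are $\leq_\TT$-minimal in $\{u_1, \ldots, u_k\}$; for such a $u_j$, one further verifies that $\Delta_{\de(v)}(u_j) = \de(u_j)$, since any $u_\ell$ strictly $\leq_\TT$-below $u_j$ would place $\de(u_\ell)$ strictly between $\de(u_j)$ and $\de(v)$, violating the wrapping of $u_j$ by $v$. Inserting this into \cref{lem:popmain}(iii) yields
\[\Pop(\de)(v) = \de(v) \setminus \bigcup_{u_j \in M_\de(v)} \de(u_j).\]
A case split on whether $u_i \in M_\de(v)$ now finishes the proof: if yes, then $\de(u_i)$ is excised entirely, so $\de(u) \subseteq \de(u_i)$ is disjoint from $\Pop(\de)(v)$; if no, then $\de(u_i)$ is disjoint from every removed $\de(u_j)$ (distinct maximal subornaments are disjoint by \cref{rem:ornamentpartition}(i)), so $\de(u_i) \subseteq \Pop(\de)(v)$ and hence $\de(u) \subseteq \Pop(\de)(v)$. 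The only step that requires more than bookkeeping is the identification $\Delta_{\de(v)}(u_j) = \de(u_j)$ for $\leq_\TT$-minimal $u_j$; once this is in hand, the rest of the argument is a direct cascade through the two preceding lemmas.
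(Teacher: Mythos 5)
Your proof is correct and follows essentially the same route as the paper's: reduce to the nontrivial case $\de(u)\subsetneq\de(v)$, locate the maximal subornament $\de(u_i)$ of $\de(v)$ containing $\de(u)$, and use \cref{rem:ornamentpartition} together with part \textit{(iii)} of \cref{lem:popmain} to conclude that $\Pop(\de)(v)$ either wholly contains or wholly excises $\de(u_i)$. The only difference is that the paper never computes $M_\de(v)$ explicitly — it only needs that each \emph{minimal reduction} contains or is disjoint from each maximal subornament, and then intersects — whereas you detour through the identification of $M_\de(v)$ with the $\leq_\TT$-minimal $u_j$ and the resulting formula $\Pop(\de)(v)=\de(v)\setminus\bigcup_{u_j\in M_\de(v)}\de(u_j)$; that identification and formula are correct, but your stated justification for $\Delta_{\de(v)}(u_j)=\de(u_j)$ is garbled: a $u_\ell$ strictly $\leq_\TT$-below $u_j$ would make $\de(u_\ell)$ \emph{disjoint from} $\de(u_j)$ and hanging below it (so $u_j$ would simply fail to be $\leq_\TT$-minimal), not ``strictly between $\de(u_j)$ and $\de(v)$,'' and it would not violate the wrapping of $u_j$ by $v$. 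This is a local misstatement rather than a gap — the correct reason is the decomposition $\Delta_{\de(v)}(u_j)=\de(u_j)\sqcup\bigsqcup_{u_\ell<_\TT u_j}\de(u_\ell)$ already established in the proof of part \textit{(ii)} of \cref{rem:ornamentpartition}.
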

\begin{proof} 
If either $u$ or $v$ is the imaginary node $\omega$, then either $\de(u)$ or $\Pop(\de)(v)$ is $\TT\cup\{\omega\}$, so the desired result is immediate. Thus, we may assume $u\neq\omega$ and $v\neq\omega$. By the definition of an ornamentation, $\de(u)$ and $\de(v)$ are either nested or disjoint, and by \cref{rem:monotonic}, $\Pop(\de)(v) \subseteq \de(v)$. Thus, if $\de(v)$ is disjoint from or contained in $\de(u)$, then its subset $\Pop(\de)(v)$ is also disjoint from or contained in $\de(u)$. Now suppose $\de(u)\subseteq\de(v)$. There exists a node $v'$ such that $\de(u)\subseteq\de(v')\subseteq\de(v)$ and such that $v$ wraps $v'$ in $\de$. Now, \cref{rem:ornamentpartition} implies that any minimal reduction $o$ of $\de(v)$ will either contain or be disjoint from $\de(v')$, and hence will either contain or be disjoint from $\de(u)$. According to \cref{lem:popmain}, $\Pop(\de)(v)$ either contains or is disjoint from $\de(u)$, as desired. 
\end{proof}

The following theorem is equivalent to \cref{thm:pop_image_intro}, but it is phrased using the language of hugs. 

\begin{theorem}\label{cla:imageOfPop}
    Let $\de \in \OO(\TT)$. Then $\de \in \Pop(\OO(\TT))$ if and only if each $u \in \TT$ is not hugged in $\de$ by any node in $\TT\cup\{\omega\}$. 
\end{theorem}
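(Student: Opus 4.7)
The plan is to prove each direction of the biconditional separately, with the forward direction proceeding by a cascade of contradictions driven by \cref{rem:monotonic}, and the reverse direction by an explicit (but delicate) construction of a preimage.

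\textbf{Forward direction.} Assume $\de = \Pop(\sigma)$ and, for contradiction, some $v \in \TT \cup \{\omega\}$ hugs $u \in \TT$ via a child $u' \in \ch_{\de(u)}(u)$, so $\Delta_{\de(u)}(u') = \Delta_{\de(v)}(u')$. Since $u' \in \ch_{\sigma(u)}(u)$, applying \cref{rem:monotonic} at $u$ yields a node $w \in \Delta_{\sigma(u)}(u') \setminus \Delta_{\de(u)}(u')$. When $v = \omega$, the hug identity forces $\Delta_{\de(u)}(u') = \Delta_\TT(u') \supseteq \Delta_{\sigma(u)}(u')$, an immediate contradiction. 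When $v \in \TT$, nesting forces $\sigma(u) \subseteq \sigma(v)$, so $w \in \sigma(v) \setminus \de(v)$, and therefore $w \in \sigma(y)$ for some minimal wrapped $y \in M_\sigma(v)$. Since $u'$ and $y$ are both $\TT$-ancestors of $w$, they are $\leq_\TT$-comparable. If $u' \leq_\TT y$, connectedness of $\sigma(y)$ forces $u' \in \sigma(y)$, so $u' \notin \de(v)$, contradicting $u' \in \de(u) \subseteq \de(v)$. If $y <_\TT u'$, then $w \in \sigma(u) \cap \sigma(y)$ forces nesting: the case $\sigma(u) \subseteq \sigma(y)$ produces the impossible chain $u \leq_\TT y <_\TT u' \leq_\TT u$, while the case $\sigma(y) \subseteq \sigma(u)$ yields the strict chain $\sigma(y) \subsetneq \sigma(u) \subsetneq \sigma(v)$, contradicting that $v$ wraps $y$ in $\sigma$.

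\textbf{Reverse direction.} Assume no node is hugged in $\de$. For each $u \in \TT$ with $\de(u) \neq \{u\}$ and each $u' \in \ch_{\de(u)}(u)$, the no-hug condition gives a witness node $x_{u, u'} \in \Delta_{o_u}(u') \setminus \Delta_{\de(u)}(u')$. The plan is to build $\sigma \in \OO(\TT)$ by enlarging each $\de(u)$ so that the witnesses become minimal wrapped subornaments of $u$ in $\sigma$; invoking \cref{lem:popmain} then shows that $\Pop$ removes precisely these added pieces and recovers $\de(u)$ for every $u$, giving $\Pop(\sigma) = \de$.

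\textbf{Main obstacle.} Carrying out the enlargement globally is the main technical challenge. A naive choice $\sigma(u) = \de(u) \cup \{x_{u, u'}\}$ with $\sigma$ trivial elsewhere can fail when an added $x_{u, u'}$ is a $\TT$-leaf: then $x_{u, u'}$ together with pre-existing leaves of $\de(u)$ all become minimal wrapped nodes of $u$ in $\sigma$, so $\Pop$ deletes too much. A correct construction must allow each $\sigma(u)$ to absorb certain sibling subornaments $\de(v)$ of $\de(u)$ inside $o_u$ with $v <_\TT u$, reshuffling them to be wrapped by $u$ in $\sigma$ rather than by $u$'s original wrapper. Verifying that this globally consistent enlargement produces a bona fide ornamentation and satisfies $\Pop(\sigma) = \de$ is the main work, most naturally organized by induction on the depth of the wrapping chain $\de(u) \subsetneq o_u \subsetneq o_{o_u} \subsetneq \cdots$ or on the size of $\TT$.
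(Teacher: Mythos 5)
Your forward direction is complete and correct, though it takes a more hands-on route than the paper: where the paper first isolates \cref{lem:disjointornested} (for any $u,v$, the ornaments $\de'(u)$ and $\Pop(\de')(v)$ are nested or disjoint) and then derives a contradiction in two lines from \cref{rem:monotonic}, you re-derive essentially the same incompatibility by chasing the witness node $w$ down to a minimal wrapped node $y\in M_\sigma(v)$ and ruling out both orderings of $u'$ and $y$. Your case analysis checks out (in particular, $\Delta_{\sigma(v)}(y)=\sigma(y)$ for $y\in M_\sigma(v)$ by \cref{rem:ornamentpartition}, so $w\in\sigma(y)$ as you claim), so this half stands on its own.

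The reverse direction, however, has a genuine gap: you never actually construct the preimage. You correctly diagnose that the naive enlargement $\sigma(u)=\de(u)\cup\{x_{u,u'}\}$ fails and that $\sigma(u)$ must absorb entire sibling subornaments sitting below $\de(u)$ inside $o_u$, but the proposal stops at describing this as ``the main work,'' to be organized by an unspecified induction. That remaining work is the substance of this half of the theorem. The resolution is a single uniform global choice that needs no induction: set $\de^*(u)=\Delta_{\de(v)}(u)$, where $v$ is the unique node wrapping $u$ in $\de$ (with the convention $\de(\omega)=\TT\cup\{\omega\}$ handling the outermost ornaments). In words, each $\de(u)$ is blown up to the entire $u$-downset of its wrapping ornament, which is exactly the maximal version of the ``absorb siblings'' fix you gesture at. One then verifies $\Pop(\de^*)=\de$ directly: the containment $\Pop(\de^*)(u)\subseteq\de(u)$ holds because every node $t$ wrapped by $v$ in $\de$ and hanging immediately below $\de(u)$ satisfies $\de^*(t)=\Delta_{\de(v)}(t)$, so $\de^*(u)\setminus\de^*(t)$ is a minimal reduction and \cref{lem:popmain}(iii) deletes $\Delta_{\de(v)}(t)$; and the reverse containment $\de(u)\subseteq\Pop(\de^*)(u)$ is precisely where the no-hug hypothesis enters, since $\Delta_{\de(u)}(u')\subsetneq\Delta_{\de(v)}(u')$ produces a node $u''$ wrapped by $v$ that hangs strictly below $\de^*(u')=\Delta_{\de(u)}(u')$ inside $\de^*(u)$, so by \cref{rem:ornamentpartition} no minimal reduction of $\de^*(u)$ can remove the $u'$-section. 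Without this (or an equivalent) explicit construction and verification, the ``if'' direction is not proved.
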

\begin{proof} 
Suppose $\de = \Pop(\de') \in \Pop(\OO(\TT))$, and assume for the sake of contradiction that there exist $u \in \TT$ and $v \in \TT\cup\{\omega\}$ such that $v$ hugs $u$ in $\de$. Since $v$ hugs $u$, there exists $u' \in \ch_{\de(u)}(u)$ such that $\Delta_{\de(u)}(u') = \Delta_{\de(v)}(u')$.
\cref{rem:monotonic} implies that $|\Delta_{\de'(u)}(u')| > |\Delta_{\de(u)}(u')| = |\Delta_{\de(v)}(u')|$, so $\de'(u) \not\subseteq \de(v)$. Additionally, $u' \in \de'(u) \cap \de(v)$ and $v \in \de(v) \setminus \de'(u)$. Thus, $\de'(u)$ and $\de(v)$ are neither disjoint nor nested. This contradicts \cref{lem:disjointornested} because $\de(v) = \Pop(\de')(v)$. 

To prove the converse, suppose now that each node $u \in \TT$ is not hugged by any node in $\TT\cup\{\omega\}$. For each $u\in\TT$, let $\de^*(u)=\Delta_{\de(v)}(u)$, where $v\in\TT\cup\{\omega\}$ is the unique node that wraps $u$ in $\de$. We will show that $\Pop(\de^*) = \de$. Fix $u\in\TT$; we will show that $\Pop(\de^*)(u)=\de(u)$. 


\begin{figure}[ht]\label{fig:inverse}
    \centering
    \includegraphics[height=4.223cm]{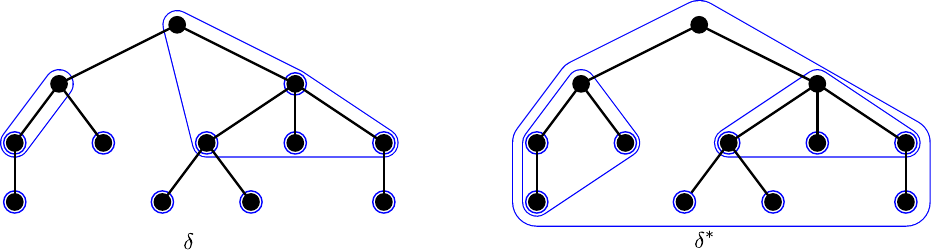}
    \caption{An ornamentation $\de$ and the ornamentation $\de^*$ constructed in the proof of \cref{cla:imageOfPop}.} 
\end{figure}

First, we show that $\Pop(\de^*)(u) \subseteq \de(u)$. Let $v$ be the node that wraps $u$ in $\de$. Consider a node $w \not\in \de(u)$; we wish to show that $w\not\in\Pop(\de^*)(u)$. If $w \not\in \de^*(u)$, then this is immediate from \cref{rem:monotonic}, so we may assume that $w \in \de^*(u)\setminus\de(u)$. Let $t$ be first node on the path from $u$ to $w$ that is not in $\de(u)$. Then $w$ is a descendant of $t$, and $t$ must be a node wrapped by $v$ in $\de$. It follows that $\de^*(t) = \Delta_{\de(v)}(t)$ and $\de^*(u) = \Delta_{\de(v)}(u)$, so $\de^*(u)\setminus\de^*(t)$ is a minimal reduction of $\de^*(u)$. By part \textit{(iii)} of \cref{lem:popmain}, this implies that $\Pop(\de^*)(u)$ is disjoint from $\de^*(t)$. Since $w\leq_\TT t<_\TT u$ and $t\not\in\Pop(\de^*)(u)$, it follows that $w \not\in \Pop(\de^*)(u)$.


Next, we show that $\de(u) \subseteq \Pop(\de^*)(u)$. This is trivial if $\de(u) = \{u\}$, so suppose $|\de(u)| > 1$. Let $w \in \de(u) \setminus \{u\}$; we will show that $w \in \Pop(\de^*)(u)$. Let $u'$ be the child of $u$ such that $w\leq_\TT u'$. Let $v$ be the node that wraps $u$ in $\de$. 
Because $v$ does not hug $u$ by assumption, $\Delta_{\de(u)}(u') \subsetneq \Delta_{\de(v)}(u')$. Therefore, there is a node $u'' \in \Delta_{\de(v)}(u') \setminus \Delta_{\de(u)}(u')$ that is wrapped by $v$ in $\de$. By definition of $\de^*$, we have $\de^*(u) = \Delta_{\de(v)}(u)$ and $\de^*(u') = \Delta_{\de(u)}(u')$, so $\de^*(u')$ contains $w$. It is not difficult to see that $u$ wraps $u'$ and $u''$ in $\de^*$, so $\de^*(u')$ and $\de^*(u'')$ are maximal subornaments of $\de^*(u)$. 
Assume for the sake of contradiction that $w \not\in \Pop(\de^*)(u)$. By part \textit{(iii)} of \cref{lem:popmain}, there exists a minimal reduction $o$ of $\de^*(u)$ that does not contain $w$. By part \textit{(ii)} of \cref{rem:ornamentpartition}, $o$ is a disjoint union of maximal subornaments of $\de^*(u)$ not including $\de^*(u')$ (because it contains $w$) and not including $\de^*(u'')$ (as $u''$ is a descendant of $u'$ and $o$ must be connected). But by part \textit{(iii)} of \cref{rem:ornamentpartition}, this contradicts the minimality of $o$. 
\end{proof}

\section{The Image of $\Pop^k$}\label{sec:image_iterate} 



Now that we have characterized the image of $\Pop$ on an arbitrary ornamentation lattice, we turn to studying the images of higher iterates $\Pop$. This turns out to be much more complicated. \cref{lem:vrank} gives a necessary condition in terms of $v$-ranks for an ornamentation to lie in the image of $\Pop^k$. The $v$-ranks are fairly global, as determining the $v$-rank of a node could require checking every node in its subtree. We will provide another necessary (but not sufficient) condition for an ornamentation to lie in the image of $\Pop^k$, this time depending on features that are local to each ornament. We will then use this condition to completely characterize $\Pop^k(\OO(\TT))$ when $\TT$ is a chain (so $\OO(\TT)$ is a Tamari lattice).

Let $\TT \in \PT_n$ and $\de \in \OO(\TT)$, and fix a node $v \in \TT$. 
Letting $\lambda_{v'}$ be the $v'$-section of $\de(v)$ (for some $v' \in \ch_{\de(v)}(v)$), an ornament $\de(u)\in\TT$ is a \dfn{bead} of $\lambda_{v'}$ in $\de$ if $u \leq_\TT v'$, $u\not\in\de(v)$, and for each node $w \in [v', u]$, either $w \in \de(v)$ or $\de(w)$ is a singleton. (In particular, $\de(u)$ must be a singleton to be a bead.)  

\begin{example}
Let $\de$ be the ornamentation of the tree $\TT$ shown in \cref{fig:enter-label2}. The ornament $\de(t)$ has a section with $1$ bead and a section with $3$ beads. The ornaments $\de(u)$ and $\de(v)$ each have exactly one section, which has $3$ beads. The ornament $\de(w)$ has one section with $4$ beads. 
\end{example}

\begin{figure}[ht]
    \centering
    \includegraphics[height=8.45cm]{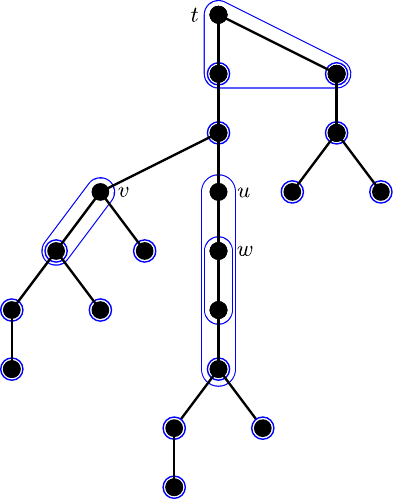}
    \caption{An ornamentation.}
    \label{fig:enter-label2}
\end{figure}

Recall from \cref{rem:ornamentpartition} that a minimal reduction of an ornament $\de(v)$ must be of the form $\de(v)\setminus\de(u)$ for some maximal subornament $\de(u)$. The following lemma describes a special property of the applications of $\Pop$ to an ornamentation that is already in $\Pop(\OO(\TT))$:

\begin{lemma}\label{lem:beads}
    Let $\de \in \Pop(\OO(\TT))$, and let $u\in M_\de(v)$. Then $\de(u)=\{u\}$. Moreover, if $w \in \ch_{\de(v)}(v)$ and $u$ belongs to the $w$-section of $\de(v)$, then for every $k\geq 1$ such that $w\in\Pop^k(\de)(v)$, the set $\{u\}$ is a bead of the $w$-section of $\Pop^k(\de)(v)$ in $\Pop^k(\de)$.
\end{lemma}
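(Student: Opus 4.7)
The plan is to prove the two assertions in order, with the first part serving as the main lever for the second. The key structural observation that I will extract from the first part is the identity
\[
\Pop(\sigma)(v) = \sigma(v) \setminus M_\sigma(v),
\]
valid for every $\sigma \in \Pop(\OO(\TT))$, which reduces each step of $\Pop$ (applied to an ornamentation already in the image) to simply deleting a set of nodes from the ornament at $v$. Iterating this identity along the orbit of $\de$ will carry the entire second claim. The main obstacle is propagating the singleton-ornament property from each $u' \in M_\sigma(v)$ to the intermediate nodes of the path $[w,u]$, which is precisely what the identity will achieve.

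For the first claim, I will unfold $u \in M_\de(v)$ using \cref{rem:ornamentpartition}. Since $\de(v) \setminus \Delta_{\de(v)}(u)$ is a minimal reduction, part~\textit{(iii)} of \cref{rem:ornamentpartition} forces $\Delta_{\de(v)}(u) = \de(u_i)$ for some maximal subornament $\de(u_i)$. Matching the $\TT$-maxima of the two sides yields $u = u_i$, and hence $\Delta_{\de(v)}(u) = \de(u)$. If $\de(u)$ were not the singleton $\{u\}$, then for any child $u' \in \ch_{\de(u)}(u)$, every descendant of $u'$ inside $\de(v)$ would lie in $\Delta_{\de(v)}(u) = \de(u)$, yielding $\Delta_{\de(u)}(u') = \Delta_{\de(v)}(u')$. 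This says $v$ hugs $u$, contradicting \cref{cla:imageOfPop}. Hence $\de(u) = \{u\}$.

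To derive the identity $\Pop(\sigma)(v) = \sigma(v) \setminus M_\sigma(v)$ for $\sigma \in \Pop(\OO(\TT))$, I apply the first claim to $\sigma$: each $u' \in M_\sigma(v)$ satisfies $\sigma(u') = \{u'\}$ and (from the same unfolding) $\Delta_{\sigma(v)}(u') = \sigma(u')$, so $\sigma_{u'}^v(v) = \sigma(v) \setminus \{u'\}$, and intersecting via part~\textit{(iii)} of \cref{lem:popmain} gives the identity. Since $\Pop^{j-1}(\de) \in \Pop(\OO(\TT))$ for every $j \geq 1$, the identity applies along the whole forward orbit of $\de$. I then verify that $\{u\}$ is a bead of the $w$-section of $\Pop^k(\de)(v)$: the relation $u \leq_\TT w$ is given; the identity applied with $j = 1$ shows $u \notin \Pop(\de)(v)$, hence $u \notin \Pop^k(\de)(v)$ by \cref{rem:monotonic}; and for any $t \in [w, u]$, the endpoint cases $t = w$ (by hypothesis) and $t = u$ (since $\Pop^k(\de)(u) \subseteq \de(u) = \{u\}$) are immediate, while for an interior $t$ with $t \notin \Pop^k(\de)(v)$, letting $j$ be the smallest index with $t \notin \Pop^j(\de)(v)$ and invoking the identity yields $t \in M_{\Pop^{j-1}(\de)}(v)$. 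The first claim applied to $\Pop^{j-1}(\de)$ then produces $\Pop^{j-1}(\de)(t) = \{t\}$, which \cref{rem:monotonic} propagates to $\Pop^k(\de)(t) = \{t\}$, completing the bead check.
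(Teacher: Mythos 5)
Your proof is correct and takes essentially the same route as the paper's: the first claim is the same hug-versus-image-of-$\Pop$ contradiction (with the key step $\Delta_{\de(v)}(u)=\de(u)$ from \cref{rem:ornamentpartition} made explicit), and the second claim is the induction on $k$ that the paper leaves implicit, here usefully packaged as the identity $\Pop(\sigma)(v)=\sigma(v)\setminus M_\sigma(v)$ for $\sigma\in\Pop(\OO(\TT))$.
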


\begin{proof}
If $|\de(u)| > 1$, then since $u\in M_\de(v)$, it must be the case that $v$ hugs $u$ in $\de$. This is impossible by \cref{cla:imageOfPop}, so we must have $\de(u)=\{u\}$. 
By \cref{lem:popmain} and induction on $k$, the second part of the lemma follows from the first. 
\end{proof}

\begin{proposition}\label{cla:popk_beads}
    Suppose $\de' =\Pop^k(\de)\in \Pop^k(\OO(\TT))$ for some $\de \in \OO(\TT)$. Then for each $v \in \TT$, each section of $\de'(v)$ has at least $k-1$ beads.
\end{proposition}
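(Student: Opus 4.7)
\medskip

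\noindent\textbf{Proof plan.} The cases $k=0$ and $k=1$ are vacuous (we need $-1$ and $0$ beads, respectively), so assume $k\geq 2$. Write $\de_i=\Pop^i(\de)$ for $0\leq i\leq k$, so $\de_0=\de$ and $\de_k=\de'$. Fix $v\in\TT$ and a child $w\in\ch_{\de'(v)}(v)$; we will manufacture $k-1$ distinct beads of the $w$-section of $\de'(v)$, one contributed by each of the applications $\de_{i-1}\mapsto\de_i$ for $i=2,3,\dots,k$.

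The key observation is that for every $i\geq 2$ we have $\de_{i-1}=\Pop(\de_{i-2})\in\Pop(\OO(\TT))$, so \cref{lem:beads} is available. Since $\de'(v)\subseteq\de_i(v)\subseteq\de_{i-1}(v)$ by \cref{rem:monotonic}, the node $w$ lies in every intermediate ornament and remains a child of $v$ there. Hence \cref{rem:monotonic} applied at step $i$ yields
\[ |\Delta_{\de_i(v)}(w)|\leq|\Delta_{\de_{i-1}(v)}(w)|-1, \]
so the $w$-section loses at least one element. Choose $u_i$ to be any such lost node in the $w$-section of $\de_{i-1}(v)$. By \cref{lem:popmain}(iii) (combined with \cref{lem:beads}(i), which forces $\Delta_{\de_{i-1}(v)}(u)=\de_{i-1}(u)=\{u\}$ for every $u\in M_{\de_{i-1}}(v)$), the set of nodes removed from $\de_{i-1}(v)$ by this application of $\Pop$ is exactly $M_{\de_{i-1}}(v)$. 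Thus $u_i\in M_{\de_{i-1}}(v)$ and $u_i$ lies in the $w$-section of $\de_{i-1}(v)$.

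Now invoke the "moreover" clause of \cref{lem:beads} with the ornamentation $\de_{i-1}$ and the exponent $k-i+1\geq 1$: since $\Pop^{k-i+1}(\de_{i-1})=\de'$ and $w\in\de'(v)$, the singleton $\{u_i\}$ is a bead of the $w$-section of $\de'(v)$ in $\de'$. Running $i$ from $2$ to $k$ produces $k-1$ beads $\{u_2\},\{u_3\},\dots,\{u_k\}$. To see they are distinct, note that for $i<j$ we have $u_i\notin\de_i(v)$ while $u_j\in\de_{j-1}(v)\subseteq\de_i(v)$ by monotonicity, so $u_i\neq u_j$. Since $v$ and $w\in\ch_{\de'(v)}(v)$ were arbitrary, every section of every ornament of $\de'$ contains at least $k-1$ beads.

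I do not expect a significant obstacle here: the entire argument is a bookkeeping exercise that leverages \cref{lem:beads} (which does the real work of propagating beads through iterated $\Pop$) together with the strict-decrease statement of \cref{rem:monotonic}. The only point requiring mild care is verifying that the distinguished nodes $u_i$ actually belong to $M_{\de_{i-1}}(v)$ so that \cref{lem:beads} applies, which is why one must first note that $\de_{i-1}\in\Pop(\OO(\TT))$ in order to conclude via \cref{lem:beads}(i) that $\Delta_{\de_{i-1}(v)}(u_i)=\{u_i\}$.
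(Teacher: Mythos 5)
Your proof is correct and follows essentially the same route as the paper's: pick, for each intermediate application of $\Pop$, a node of $M_{\Pop^i(\de)}(v)$ lying in the $w$-section, use \cref{lem:beads} to see these are singletons that persist as beads down to $\de'$, and check distinctness via the step at which each node is removed. Your indexing ($u_i\in M_{\de_{i-1}}(v)$ for $2\le i\le k$) is just a shift of the paper's ($u_i\in M_{\Pop^i(\de)}(v)$ for $1\le i\le k-1$), and the extra care you take in verifying that the removed nodes are exactly $M_{\de_{i-1}}(v)$ is a worthwhile (if implicit in the paper) detail.
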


\begin{proof}
Suppose $w \in \ch_{\de'(v)}(v)$. For $0\leq i \leq k$, let $\lambda_w^{(i)}$ be the $w$-section of $\Pop^i(\de)(v)$ in $\Pop^i(\de)$. For each $1\leq i\leq k-1$, let $u_i\in M_{\Pop^i(\de)}(v)$ be such that $u_i \in \lambda_w^{(i)}$. It follows from \cref{lem:popmain} that $\Pop^i(\de)$ contains $u_i,\ldots,u_{k-1}$ but does not contain $u_{i-1}$; hence, $u_1,\ldots,u_{k-1}$ are all distinct. According to \cref{lem:beads}, $u_1,\ldots,u_{k-1}$ are all beads of the $w$-section of $\de'(v)$ in $\de'$. 
\end{proof}

While \cref{lem:vrank,cla:imageOfPop,cla:popk_beads} each impose necessary, independent conditions for an ornamentation to belong to $\Pop^k$, their conjunction is still not a sufficient condition. Consider the ornamentation $\de$ in \cref{fig:notsufficient}. None of these three results eliminate the possibility that $\de$ is in the image of $\Pop^2$. Indeed, each node in $\de(v_0)$ has a $v_0$-rank of at least $2$, the unique non-singleton ornament $\de(v_0)$ is not hugged by any ornament (including $\de(\omega)$), and the unique section of $\de(v_0)$ in $\de$ has $2$ beads as necessary. However, it is straightforward to check that $\de$ is not in the image of $\Pop^2$.


    \begin{figure}[ht]
    \begin{center}{\includegraphics[height=3.466cm]{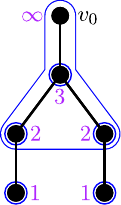}}
    \end{center}
    \caption{An ornamentation demonstrating that \cref{lem:vrank}, \cref{cla:imageOfPop}, and \cref{cla:popk_beads} are insufficient for guaranteeing that an ornamentation belongs to the image of $\Pop^2$. Each node is labeled by its $v_0$-rank.} 
    \label{fig:notsufficient}
    \end{figure} 

The next theorem says that \cref{lem:vrank,cla:imageOfPop,cla:popk_beads} do provide a complete characterization of $\Pop^k(\OO(\TT))$ when $\TT=\CC_n$ is a chain with $n$ elements. We will need a bit more notation. Let $v_1,\ldots,v_n$ be the elements of $\CC_n$, with $v_n<_{\CC_n} v_{n-1}<_{\CC_n}\cdots<_{\CC_n} v_1$. For $\de\in\OO(\CC_n)$ and $i\in[n]$, let $g_\de(i)=\max\{r\in[n]:v_r\in\de(v_i)\}$ so that $\de(v_i)=\{v_j:i\leq j\leq g_\de(i)\}$. This yields a function $g_\de\colon [n] \to[n]$. For $i < j$, the ornaments $\de(v_i)$ and $\de(v_j)$ are disjoint if $g_\de(i) < j$, and they are nested if $g_\de(i) \geq g_\de(j)$. 

\begin{theorem}\label{thm:popk_tamari}
Let $\CC_n$ be a chain with $n$ elements, and let $\de \in \OO(\CC_n)$. Then $\de \in \Pop^k(\OO(\CC_n))$ if and only if for each $v \in \CC_n$, either $\de(v) = \{v\}$ or $v$ satisfies the following properties:
    \begin{enumerate}[(i)]
        \item The node $v$ is not hugged in $\de$ by any node in $\CC_n$.
        \item There are at least $k$ ornaments below $\de(v)$, the top $k-1$ of which are singletons. 
    \end{enumerate}
\end{theorem}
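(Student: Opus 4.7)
The plan is to prove both directions: necessity by specializing the general constraints from \cref{sec:image_iterate} to the chain, and sufficiency by induction on $k\geq 1$ via a construction that produces a $\Pop$-preimage of $\de$ still satisfying the conditions for $k-1$. The case $k=0$ is trivial since $\Pop^0(\OO(\CC_n))=\OO(\CC_n)$.

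For necessity, assume $\de\in\Pop^k(\OO(\CC_n))$ with $k\geq 1$ and fix a non-singleton ornament $\de(v_i)=\{v_i,\ldots,v_{g_\de(i)}\}$. Since $\Pop^k(\OO(\CC_n))\subseteq\Pop(\OO(\CC_n))$, \cref{cla:imageOfPop} immediately yields (i). For (ii), I will combine \cref{cla:popk_beads} with \cref{lem:vrank}. In a chain each non-singleton ornament has exactly one section, whose beads are precisely the consecutive singletons $\{v_{g_\de(i)+1}\},\{v_{g_\de(i)+2}\},\ldots$ sitting directly below $\de(v_i)$; \cref{cla:popk_beads} therefore forces the first $k-1$ of these to exist as singletons. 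Meanwhile, in $\CC_n$ the only maximal chain $C$ satisfies $b_C\equiv 0$, hence $f_C\equiv 0$, so $\TT_{v_i}^*=\CC_n$ and $\rk_{v_i}(v_r)=n-r$. Applying \cref{lem:vrank} to the bottom node $v_{g_\de(i)}$ of $\de(v_i)$ gives $n-g_\de(i)\geq k$, so $v_{g_\de(i)+k}$ exists and supplies the $k$-th ornament below $\de(v_i)$.

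Sufficiency proceeds by induction on $k$. The base case $k=1$ is immediate from \cref{cla:imageOfPop}: condition (i) excludes $\CC_n$-node hugs and condition (ii) (i.e.\ $g_\de(i)<n$) excludes $\omega$-hugs. For the inductive step, assume the result for $k-1\geq 1$ and let $\de$ satisfy the conditions for $k$. Define
\[\de'(v_i)=\begin{cases}\de(v_i)\cup\{v_{g_\de(i)+1}\} & \text{if } \de(v_i)\neq\{v_i\},\\ \{v_i\} & \text{otherwise};\end{cases}\]
condition (ii) ensures this is well-defined since $v_{g_\de(i)+1}$ exists and is a singleton in $\de$. A direct nesting/disjointness check, using $g_{\de'}(i)=g_\de(i)+1$ for non-singletons and $g_{\de'}(i)=i$ for singletons, confirms that $\de'$ is an ornamentation. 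It then suffices to verify $\Pop(\de')=\de$ and that $\de'$ satisfies conditions (i'), (ii') for $k-1$; the inductive hypothesis then yields $\de'\in\Pop^{k-1}(\OO(\CC_n))$ and hence $\de=\Pop(\de')\in\Pop^k(\OO(\CC_n))$.

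Both remaining verifications rely on one key observation: in a chain, the unique minimal reduction of an ornament strips off its bottom-most maximal subornament (any other choice leaves a disconnected set), and the bottom-most maximal subornament of $\de'(v_i)$ in $\de'$ is exactly $\{v_{g_\de(i)+1}\}$. To secure the latter, I must rule out any intermediate ornament $\de'(v_r)$ with $i<r<g_\de(i)+1$ strictly between $\{v_{g_\de(i)+1}\}$ and $\de'(v_i)$; such an intermediate would force $g_\de(r)=g_\de(i)$, and then $v_i$ would hug $v_r$ in $\de$, contradicting (i). Given this, $\Pop(\de')(v_i)=\de(v_i)$ is immediate, condition (ii') is routine (the bottom of $\de'(v_i)$ sits one lower, so the ornaments below are the $k-2$ unchanged singletons $\de'(v_{g_\de(i)+2}),\ldots,\de'(v_{g_\de(i)+k-1})$ followed by $\de'(v_{g_\de(i)+k})$, which exists by (ii) for $\de$), and condition (i') follows by essentially the same intermediate-ornament translation: any putative hug of $v_i$ in $\de'$ by some $v_{i'}$ forces $g_\de(i')=g_\de(i)$ and descends to a hug of $v_i$ in $\de$. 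This hug-translation between $\de$ and $\de'$, which works precisely because exactly one node was absorbed into each non-singleton ornament, is the main technical obstacle.
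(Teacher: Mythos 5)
Your proposal is correct and follows essentially the same route as the paper: necessity via \cref{cla:imageOfPop}, \cref{cla:popk_beads}, and a size bound (you use \cref{lem:vrank}, the paper cites \cref{rem:monotonic}, both of which give $g_\de(i)+k\leq n$), and sufficiency via extending each non-singleton ornament downward by one node at a time — your inductive step with $\de'$ is exactly the paper's tower $\de^{(0)},\ldots,\de^{(k-1)}$ with $\Pop(\de^{(t+1)})=\de^{(t)}$, just organized as an induction on $k$ with \cref{cla:imageOfPop} invoked at the base rather than at the top. The hug-translation argument you identify as the technical crux is indeed the content of the paper's "it is not difficult to check" step, and your verification of it is sound.
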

\begin{proof}
We know by  \cref{cla:imageOfPop,cla:popk_beads,rem:monotonic} that the conditions in the statement of the theorem are necessary, so it remains to show they are sufficient. Thus, suppose that for each $v\in\CC_n$, either $\de(v)=\{v\}$ or $v$ satisfies \textit{(i)} and \textit{(ii)}. 
For $0\leq t \leq k-1$, let $\de^{(t)}$ be the ornamentation resulting from extending each non-singleton ornament in $\de$ downwards by $t$ nodes. Formally, $\de^{(t)}$ is the ornamentation whose corresponding function $g_{\de^{(t)}}$ is given by
    \begin{equation*} g_{\de^{(t)}}(i) = \begin{cases} 
          i & \text{if }g_{\de}(i)=i; \\
          g_{\de}(i)+t& \text{otherwise.}
       \end{cases} 
    \end{equation*}
    Note that the ornamentations $\de^{(0)},\ldots,\de^{(k-1)}$ all have the same set of non-singleton ornaments. Also, if $\de^{(k-1)}(u)$ is not a singleton, then $u$ is not hugged in $\de^{(k-1)}$ by any other node in $\CC_n\cup\{\omega\}$. Therefore, \cref{cla:imageOfPop} implies that $\de^{(k-1)}$ is in the image of $\Pop$. Furthermore, it is not difficult to check using \cref{lem:popmain} that for each $t < k-1$, we have $\Pop(\de^{(t+1)})=\de^{(t)}$. It follows that $\de=\Pop^{k-1}(\de^{(k-1)})$, so $\de$ is in the image of $\Pop^k$. 
    \end{proof}

The characterization above allows us to prove \cref{thm:gf}, which states that \[\sum_{n\geq 0}|\Pop^k(\OO(\CC_n))|x^n=\frac{1-x^{k+1}-\sqrt{(1-x^{k+1})^2-4x(1-x)(1-x^{k+1})}}{2x(1-x)}.\]  

\begin{proof}[Proof of \cref{thm:gf}]
Let $A_{n}^{(k)} = |\Pop^k(\OO(\CC_n))|$.
Fix integers $n,k\geq 0$. It follows from \cref{thm:forward_orbit} that $A^{(k)}_n = 1$ if $n \leq k+1$. (This also follows from \cite[Theorem~4.13]{DefantMeeting}.) Now suppose $n\geq k+2$; we will show that 
    \[A^{(k)}_n=\sum_{i=k}^{n-1}A^{(k)}_iA^{(k)}_{n-1-i}.\] 
Using this recurrence, it is routine to derive the desired expression for $\sum_{n\geq 0}A_n^{(k)}x^n$. 

Let $A^{(k)}_{n,a}$ be the number of ornamentations $\de$ in $\Pop^k(\OO(\CC_n))$ such that $g_\de(v_1)=a$. If $a = 1$, the ornament $\de(v_1)$ satisfies the condition in \cref{thm:popk_tamari} because it is a singleton, and does not interact with any of the other ornaments. Hence, $A^{(k)}_{n,a} = A^{(k)}_{n-1}$.

Now suppose $a > 1$. For $\de$ to satisfy the conditions in \cref{thm:popk_tamari}, we must have $a \leq n - k$, and the ornaments $\de(v_a),\de(v_{a+1}),\ldots,\de(v_{a+k-1})$ must be singletons. Consequently, the ornaments $\de(v_i)$ with $i\in[a+k-1]$ must be disjoint from the ornaments $\de(v_j)$ with $j\in[a+k,n]$. This means that to construct $\de$, we must choose the ornaments $\de(v_j)$ with $j\in[a+k,n]$ and then independently choose the ornaments nested in $\de(v_1)$. The number of ways to choose the ornaments $\de(v_j)$ with $j\in[a+k,n]$ is $A^{(k)}_{n-(a+k-1)}$. It remains to choose the ornaments that are nested inside $\de(v_1)$. The first constraint in \cref{thm:popk_tamari} imposes that any non-singleton ornament nested in $\de(v_1)$ cannot contain $v_a$. We can define a bijection between configurations of ornaments nested in $\de(v_1)$ (that satisfy the conditions in \cref{thm:popk_tamari}) and ornamentations in $\Pop^k(\OO(\CC_{a+k-2}))$ as follows. For each such configuration, remove the node $v_1$ and its ornament, and remove all nodes and ornaments strictly below $v_{a+k-1}$. 
This shows that there are $A^{(k)}_{a+k-2}$ ways to choose the ornaments nested in $\de(v_1)$, and we deduce that $A^{(k)}_{n,a}=A^{(k)}_{n-(a+k-1)}A^{(k)}_{a+k-2}$. In conclusion,
\[A^{(k)}_n=A^{(k)}_{n-1}+\sum_{a=2}^{n-k}A^{(k)}_{n-(a+k-1)}A^{(k)}_{a+k-2}=\sum_{i=k}^{n-1}A^{(k)}_iA^{(k)}_{n-1-i}. \qedhere\] 
\end{proof}

\section{Semidistributivity}\label{sec:semidistributivity}  
A finite lattice $L$ is called \dfn{semidistributive} if for all $a,b\in L$ with $a\leq b$, the set ${\{z\in L:z\wedge b=a\}}$ has a maximum and the set     $\{z\in L:z\vee a=b\}$ has a minimum. Associated to a finite semidistributive lattice is a certain simplicial complex called its \emph{canonical join complex}, which has one face for each element of the lattice (see, e.g., \cite{Barnard}). Defant and Williams \cite{Semidistrim} found that elements in the image of $\Pop$ correspond naturally to the facets of the canonical join complex. 

In this brief section, we prove that ornamentation lattices are semidistributive, which provides further motivation for our analysis of the image of $\Pop$ in \cref{sec:image}. We will need the following result due to Barnard. Given a finite poset $P$ with partial order $\leq_P$ and an element $x\in P$, we extend our earlier notation by writing 
\[\Delta_{P}(x)=\{x'\in P: x'\leq_P x\}\quad\text{and}\quad\nabla_P(x)=\{x'\in P:x\leq_P x'\}.\] 

\begin{proposition}[{\cite[Proposition~22]{Barnard}}]\label{prop:Barnard}
A finite lattice $L$ is semidistributive if and only if for every cover relation $x'\lessdot x$ in $L$, the set $\Delta_L(x)\setminus\Delta_L(x')$ has a minimum element and the set $\nabla_L(x')\setminus\nabla_L(x)$ has a maximum element. 
\end{proposition}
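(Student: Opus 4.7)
The plan is to handle the two directions of the biconditional separately. The forward direction will be essentially immediate once I set up the right set identifications, while the backward direction requires a contradiction argument that extends the hypothesis from covers to arbitrary pairs $a\leq b$.

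For the forward direction, I will first establish that for any cover $x'\lessdot x$,
\[
\Delta_L(x)\setminus\Delta_L(x')=\{z\in L:z\vee x'=x\}\quad\text{and}\quad\nabla_L(x')\setminus\nabla_L(x)=\{z\in L:z\wedge x=x'\}.
\]
One inclusion is immediate: $z\vee x'=x$ forces $z\leq x$ and $z\not\leq x'$. For the reverse inclusion, if $z\leq x$ and $z\not\leq x'$, then $x'<z\vee x'\leq x$, and the cover relation forces $z\vee x'=x$; the second identity is analogous. With these identifications in place, the two existence claims in the cover condition are just special cases of the semidistributivity hypothesis applied with $a=x'$ and $b=x$, so this direction will be complete.

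For the backward direction, by order-duality it suffices to show that $\{z:z\vee a=b\}$ has a minimum for every $a\leq b$. Fix $a\leq b$ and suppose for contradiction that the set $N=\{z:z\vee a=b\}$ has two distinct minimal elements $v_1$ and $v_2$; such minimal elements exist since $N$ is nonempty (it contains $b$) and $L$ is finite. I will set $u=v_1\wedge v_2$. Since $u<v_1$ and $v_1$ is minimal in $N$, we must have $u\notin N$, so $u\vee a<b$. Writing $w_0=u\vee a$, I will choose any cover $w'\lessdot b$ with $w_0\leq w'$, which exists because $L$ is finite and $w_0<b$. The cover hypothesis then supplies a minimum element $j$ of $\Delta_L(b)\setminus\Delta_L(w')=\{z:z\vee w'=b\}$. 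Because $a\leq w_0\leq w'$, for each $i\in\{1,2\}$ we have $v_i\vee w'\geq v_i\vee a=b$, while $v_i\leq b$ forces $v_i\vee w'\leq b$, so $v_1,v_2\in\{z:z\vee w'=b\}$. Consequently $j\leq v_1\wedge v_2=u\leq w'$, which contradicts the fact that $j\vee w'=b\neq w'$ (so $j\not\leq w'$).

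The main obstacle will be locating the correct contradiction in the backward direction. A naive induction on the length of a longest chain in $[a,b]$ stumbles because the minimum of $\{z:z\vee y'=b\}$ for an intermediate $y'$ with $a\leq y'$ need not lie in $\{z:z\vee a=b\}$. The key trick is instead to push the problem \emph{upward}: choose a cover just below $b$ and above $u\vee a$, so that the minimum $j$ delivered by the cover hypothesis is simultaneously forced to lie below both $v_1$ and $v_2$ (hence below $w'$) and to satisfy $j\not\leq w'$. Once this is set up, the rest of the argument and its order-dual for $\{z:z\wedge b=a\}$ are routine.
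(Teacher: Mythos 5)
Your proof is correct. Note that the paper does not prove this proposition at all---it is quoted from Barnard's paper \cite{Barnard}---so there is no in-paper argument to compare against. Your forward direction rests on the valid identifications $\Delta_L(x)\setminus\Delta_L(x')=\{z:z\vee x'=x\}$ and $\nabla_L(x')\setminus\nabla_L(x)=\{z:z\wedge x=x'\}$ for a cover $x'\lessdot x$, and your backward direction (taking two distinct minimal elements $v_1,v_2$ of $\{z:z\vee a=b\}$, lifting $u\vee a$ to a coatom $w'$ of the interval $[a,b]$ below $b$, and deriving $j\leq u\leq w'$ against $j\not\leq w'$) is sound; the dual case follows by the self-duality of the cover hypothesis. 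This is a complete and standard argument for the cited characterization.
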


\begin{theorem}
Let $\TT$ be a rooted plane tree. The ornamentation lattice $\OO(\TT)$ is semidistributive. 
\end{theorem}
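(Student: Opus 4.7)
The plan is to apply \cref{prop:Barnard}: for each cover relation $\de'\lessdot\de$ in $\OO(\TT)$, I must exhibit a minimum for $\Delta_{\OO(\TT)}(\de)\setminus\Delta_{\OO(\TT)}(\de')$ and a maximum for $\nabla_{\OO(\TT)}(\de')\setminus\nabla_{\OO(\TT)}(\de)$. By \cref{lem:popmain}(ii), any such cover is of the form $\de'=\de_u^v$ for a unique pair $(v,u)$ with $u\in M_\de(v)$; the minimality condition forces (via the proof of \cref{rem:ornamentpartition}(iii)) the equality $\Delta_{\de(v)}(u)=\de(u)$, so $\de'$ agrees with $\de$ except at $v$, where $\de'(v)=\de(v)\setminus\de(u)$. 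A consequence I use repeatedly is that $\de(v)\cap\Delta_\TT(u)=\de(u)$.

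For the minimum, I take $\sigma\in\OO(\TT)$ defined by $\sigma(v)=[u,v]$, the unique path from $u$ to $v$ in $\TT$, and $\sigma(w)=\{w\}$ for every $w\neq v$. Standard checks confirm that $\sigma$ is an ornamentation with $\sigma\le\de$ (using connectedness of $\de(v)$) and $\sigma\not\le\de'$ (since $u\in\sigma(v)\setminus\de'(v)$). For minimality, any $\sigma'$ in the target set has $\sigma'(v)\not\subseteq\de'(v)$, so $\sigma'(v)$ meets $\de(u)\subseteq\Delta_\TT(u)$; the connectedness of $\sigma'(v)$ then forces the $\TT$-path from $v$ down through $u$ to the witness, and in particular all of $[u,v]$, to lie in $\sigma'(v)$, yielding $\sigma\le\sigma'$.

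For the maximum, the key structural step is the claim that every $\tau\in\nabla_{\OO(\TT)}(\de')\setminus\nabla_{\OO(\TT)}(\de)$ satisfies $\tau(v)\cap\Delta_\TT(u)=\emptyset$. Indeed, if $\tau(v)$ met $\Delta_\TT(u)$, then connectedness would force $u\in\tau(v)$, and then $\tau(u)\supseteq\de(u)$ together with the nested-or-disjoint axiom (both $\tau(u)$ and $\tau(v)$ contain $u$) would yield $\de(u)\subseteq\tau(u)\subseteq\tau(v)$, so $\tau(v)\supseteq\de'(v)\cup\de(u)=\de(v)$; since the remaining inclusions $\tau(w)\supseteq\de(w)$ hold automatically for $w\neq v$, this would contradict $\tau\not\ge\de$. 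Given this, the explicit maximum is $\tau^*$ defined by $\tau^*(w)=\Delta_\TT(w)\setminus\Delta_\TT(u)$ whenever $w\in(u,v]$ and $\tau^*(w)=\Delta_\TT(w)$ otherwise. A case analysis on the position of $w$ relative to $u$ and $v$ verifies that $\tau^*$ is indeed an ornamentation, satisfies $\tau^*\ge\de'$, and avoids $\tau^*\ge\de$ (because $u\notin\tau^*(v)$). Dominance then follows directly: any $\tau$ in the set has $\tau(v)\subseteq\tau^*(v)$ by the structural claim, and for each $w\in(u,v)$ the node $w$ lies in $\de'(v)\subseteq\tau(v)$, forcing $\tau(w)\subseteq\tau(v)$ and hence $\tau(w)\subseteq\Delta_\TT(w)\cap\tau(v)\subseteq\Delta_\TT(w)\setminus\Delta_\TT(u)=\tau^*(w)$; for every remaining $w$, the trivial containment $\tau(w)\subseteq\Delta_\TT(w)=\tau^*(w)$ suffices.

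The hardest step is the structural claim $\tau(v)\cap\Delta_\TT(u)=\emptyset$, which combines ornament connectedness with the laminarity axiom in a non-obvious way. Once that identity is in hand, the closed-form description of $\tau^*$ makes both the ornamentation check and the dominance argument essentially mechanical, so the min side and the max side end up on similar footing.
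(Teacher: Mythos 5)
Your proposal is correct and follows essentially the same route as the paper: you invoke Barnard's criterion, reduce to covers of the form $\de_u^v$, take the path ornamentation $[u,v]$ hung at $v$ as the minimum of $\Delta_L(\de)\setminus\Delta_L(\de_u^v)$, and take exactly the paper's $\de^\uparrow$ (your $\tau^*$) as the maximum of $\nabla_L(\de_u^v)\setminus\nabla_L(\de)$, with the same laminarity-plus-connectedness argument showing $\tau(v)$ avoids $\Delta_\TT(u)$. The only cosmetic difference is that you spell out the dominance check $\tau(w)\subseteq\tau^*(w)$ for $w\in(u,v]$ in more detail than the paper does.
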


\begin{proof}
For convenience, let $L=\OO(\TT)$. Let $\de'\lessdot\de$ be a cover relation in $L$. By \cref{lem:popmain}, there exist $v\in\TT$ and $u\in M_\de(v)$ such that $\de'=\de_u^v$. Appealing to \cref{prop:Barnard}, we find that we just need to show that $\Delta_L(\de)\setminus\Delta_L(\de_u^v)$ has a minimum element and that $\nabla_L(\de_u^v)\setminus\nabla_L(\de)$ has a maximum element. 

Let $o=\{w\in\TT: u\leq_\TT w\leq_\TT v\}$ be the smallest ornament of $\TT$ that contains both $u$ and $v$. Let $\de_\downarrow$ be the ornamentation of $\TT$ defined so that $\de_\downarrow(w)=\{w\}$ for all $w\in\TT\setminus\{v\}$ and $\de_\downarrow(v)=o$. If $\sigma\in\Delta_L(\de)\setminus\Delta_L(\de_u^v)$, then $\sigma(v)$ must contain a node in $\Delta_{\de(v)}(u)$, so it must in fact contain $o$. It follows that $\de_\downarrow$ is the minimum element of $\Delta_L(\de)\setminus\Delta_L(\de_u^v)$. 

Let $\de^\uparrow$ be the ornamentation defined by 
\[\de^\uparrow(w)=\begin{cases} 
          \Delta_\TT(w)\setminus\Delta_\TT(u) & \text{if $u<_\TT w\leq_\TT v$;} \\
          \Delta_\TT(w) & \text{otherwise.}
       \end{cases} \]
It is immediate from the definition of $\de_u^v$ that $\de^\uparrow$ is in the set $\nabla_L(\de_u^v)\setminus\nabla_L(\de)$; we claim that it is the maximum element of this set. To see this, suppose $\sigma\in\nabla_L(\de_u^v)\setminus\nabla_L(\de)$. Because $\sigma\not\geq\de$, we know that $\sigma(v)$ cannot contain $\de(v)$. But $\de(v)=\de_u^v(v)\sqcup\de(u)$. We have $\de(u)=\de_u^v(u)\subseteq\sigma(u)$ and $\de_u^v(v)\subseteq\sigma(v)$. This implies that $\sigma(v)$ cannot contain $\sigma(u)$, 
so it cannot contain $u$. Hence, $\sigma(v)\subseteq\Delta_\TT(v)\setminus\Delta_\TT(u)$. This implies that $\sigma(w)\subseteq\Delta_\TT(w)\setminus\Delta_\TT(u)$ for all $w$ satisfying $u<_\TT w\leq_\TT v$. Hence, $\sigma\leq\de^\uparrow$. 
\end{proof} 

\section*{Acknowledgments}
Colin Defant was supported by the National Science Foundation under Award No.\ 2201907 and by a Benjamin Peirce Fellowship at Harvard University.

\end{document}